\newtheorem{theorem}{Theorem}
\newtheorem{lemma}[theorem]{Lemma}
\newtheorem{corollary}[theorem]{Corollary}
\newtheorem{method}[theorem]{Method}
\numberwithin{equation}{section}
\numberwithin{theorem}{section}
\journal{}
\begin{document}

\begin{frontmatter}
\title{On semi-convergence of generalized skew-Hermitian triangular splitting iteration methods for singular saddle-point problems}
\author{Yan Dou}
\author{Ai-Li Yang\corref{yal}}\ead{yangaili@lzu.edu.cn; cmalyang@gmail.com}
\cortext[yal]{Corresponding author. Tel.: +86 931 8912483; fax: +86 931 8912481.}
\author{Yu-Jiang Wu}

\address{School of Mathematics and Statistics, Lanzhou University, Lanzhou 730000, PR China}

\begin{abstract}
Recently, Krukier et al. [Generalized skew-Hermitian triangular splitting iteration methods for saddle-point linear systems, Numer. Linear Algebra Appl. 21 (2014) 152-170] proposed an efficient \emph{generalized\ skew-Hermitian\ triangular\ splitting} (GSTS) iteration method for nonsingular saddle-point linear systems with strong skew-Hermitian parts. In this work, we further use the GSTS method to solve \emph{singular} saddle-point problems. The semi-convergence properties of GSTS method are analyzed by using singular value decomposition and Moore-Penrose inverse, under suitable restrictions on the involved iteration parameters. Numerical results are presented to demonstrate the feasibility and efficiency of the GSTS iteration methods, both used as solvers and preconditioners for GMRES method.

\noindent\emph{MSC:} 65F08; 65F10; 65F20
\end{abstract}
\begin{keyword}
singular saddle-point problems; skew-Hermitian triangular splitting; iteration method; semi-convergence; Moore-Penrose inverse; singular value decomposition
\end{keyword}

\end{frontmatter}
\section{Introduction}
Consider the following saddle-point linear system:
\begin{equation}\label{01}
\mathcal{A}\,u\equiv\left(
                    \begin{array}{cc}
                      M & E \\
                      -E^{*} & 0 \\
                    \end{array}
                  \right)\left(
                            \begin{array}{c}
                              u_{1} \\
                              u_{2} \\
                            \end{array}
                          \right)=\left(
                                        \begin{array}{c}
                                          f_{1} \\
                                          f_{2} \\
                                        \end{array}
                                      \right)\equiv f,
\end{equation}
where $M\in \mathbb{C}^{p\times p}$ is a Hermitian positive definite matrix, $E\in \mathbb{C}^{p\times q}$ is a rectangular matrix satisfying $q\leq p$, and $f\in \mathbb{C}^{p+q}$ is a given vector in the range of $\mathcal{A}\in \mathbb{C}^{(p+q)\times (p+q)}$, with $f_{1}\in \mathbb{C}^{p}$ and $f_{2}\in \mathbb{C}^{q}$. This kind of linear systems arise in a variety of scientific and engineering applications, such as computational fluid dynamics, constrained optimization, optimal control, weighted least-squares problems, electronic networks, computer graphic etc, and typically result from mixed or hybrid finite element approximation of second-order elliptic problems or the Stokes equations; see \cite{BPW20051,SSY19981,WSY2004139,ZBY2009808,Brezzi1991}.

When matrix $E$ is of full column rank, the saddle-point matrix $\mathcal{A}$ is nonsingular. A number of effective iteration methods, such as matrix splitting iteration methods, Minimum residual methods, Krylov subspace iteration methods etc, have been proposed in the literature to approximate the unique solution of the nonsingular saddle-point problems \eqref{01}; see \cite{BGP20041,BGL20051,EG19941645,FRSW1998527,KGW20001300,B2009447,BW20082900} and the references therein. Recently, Krukier et al. \cite{Krukier2013} proposed a generalized skew-Hermitian triangular splitting (GSTS) iteration method for solving the linear systems with strong skew-Hermitian parts. When used for approximating the solution of the nonsingular saddle-point problem \eqref{01}, the GSTS method can be described as follows.
\begin{method}\label{met:1}(The GSTS iteration method) Given initial guesses $u^{(0)}_{1}\in \mathbb{C}^{p}$ and $u^{(0)}_{2}\in \mathbb{C}^{q}$, for $k=0,1,2\ldots$, until $u^{(k)}=[u_1^{(k)};u_2^{(k)}]$ convergence
\begin{enumerate}
  \item[(i)] compute $u_2^{(k+1)}$ from
  \begin{equation}\label{02}
    u_2^{(k+1)}=u_2^{(k)}+\tau B^{-1}\left[\omega_1E^*M^{-1}\left(f_1-Eu_2^{(k)}\right)+(1-\omega_1)E^*u_1^{(k)}+f_2\right];
  \end{equation}
  \item[(ii)] compute $u_2^{(k+1)}$ from
  \begin{equation}\label{19}
    u_1^{(k+1)}=(1-\tau) u_1^{(k)}+ M^{-1}\left[E\left((\omega_2-\tau)u_2^{(k)}-\omega_2u_2^{(k+1)}\right)+\tau f_1\right],
  \end{equation}
\end{enumerate}
where $\omega_{1}$ and $\omega_{2}$ are two nonnegative acceleration parameters with at least one of them being nonzero, $\tau$ is a positive parameter,
$B\in\mathbb{C}^{q\times q}$ is a Hermitian positive definite matrix, which is chosen as an approximation of the Shur complement $S_M:=E^{*}M^{-1}E$.
\end{method}

Theoretical analysis and numerical experiments in \cite{Krukier2013} have shown that the GSTS iteration method is convergent under suitable restrictions on iteration parameters. Moreover, no matter as a solver or as a preconditioner for GMRES method, the GSTS method is robust and effective for solving the large sparse nonsingular saddle-point linear systems. However, matrix $E$ in saddle-point matrix $\mathcal{A}$ is rank deficient in many real world applications, such as the discretization of incompressible steady state Stokes problem with suitable boundary conditions; see \cite{WSY2004139,ZhangWei2010139}. In this case, the saddle-point linear systems are always singular and consistent. The Uzawa algorithm and its variants \cite{ZBY2009808,ZLW2014334}, Hermitian and skew-Hermitian splitting iteration method \cite{Bai2010171,BGN2002603,Li20122338}, general stationary linear iteration method \cite{Cao20081382,ZhangWei2010139}, Krylov subspace methods (preconditioned by block-diagonal, block-tridiagonal or constraint preconditioners) \cite{WSY2004139,BW199737,ZhangShen2013116} etc, can be used to approximate a solution of the singular and consistent saddle-point linear system.

In this work, owing to the high efficiency of the GSTS iteration method used for solving the nonsingular saddle-point linear systems, we will further analyze the feasibility and efficiency of the GSTS iteration method when it is used for solving the singular saddle-point problems \eqref{01} with Hermitian positive definite matrix $M\in \mathbb{C}^{p\times p}$ and rank deficient matrix $E\in \mathbb{C}^{p\times q}$. Since matrix $E$ is rank deficient, the Shur complement $S_M=E^{*}M^{-1}E$ is Hermitian positive semi-definite. As the approximation of Shur complement, it may be better if we choose matrix $B$ being a Hermitian positive semi-definite matrix and having the same null space with Shur complement $S_M$. In this way, matrix $B$ is singular, we replace iteration scheme \eqref{02} in Method \ref{met:1} by the scheme of the form
\begin{equation}\label{14}
  u_2^{(k+1)}=u_2^{(k)}+\tau B^{\dag}\left[\omega_1E^*M^{-1}\left(f_1-Eu_2^{(k)}\right)+(1-\omega_1)E^*u_1^{(k)}+f_2\right],
\end{equation}
where $B^{\dag}$ is the Moore-Penrose inverse \cite{BermanPlemmons1994,Kucera2011} of the singular matrix $B$, which satisfies
\[
B=BB^{\dag}B,\quad B^{\dag}=B^{\dag}BB^{\dag},\quad BB^{\dag}=(BB^{\dag})^{*}, \quad B^{\dag}B=(B^{\dag}B)^{*}.
\]
The convergence properties of the GSTS iteration methods, with Hermitian positive definite and singular Hermitian positive semi-definite matrices $B$, will be carefully analyzed. Moreover, the feasibility and efficiency of the GSTS iteration methods for singular and consistent saddle-point problems will also be numerically verified.

The remainder part of this work is organized as follows. In Section \ref{sec2} we give the semi-convergence concepts of the GSTS iteration methods with different choices of matrix $B$, i.e., $B$ is Hermitian positive definite and singular Hermitian positive semi-definite. When $B$ is Hermitian positive definite, the semi-convergence properties of the GSTS iteration method are analyzed in Section \ref{sec3}. In Section \ref{sec4}, we give the semi-convergence properties of the GSTS method with $B$ being singular Hermitian positive semi-definite. In Section \ref{sec5}, numerical results are presented to show the feasibility and effectiveness of the GSTS iteration methods for solving the singular saddle-point linear systems. Finally, in Section \ref{sec6}, we end this work with a brief conclusion.

\section{Basic concepts and lemmas}\label{sec2}
We split matrix $\mathcal{A}$ into its Hermitian and skew-Hermitian parts, i.e., $\mathcal{A}=\mathcal{A}_{H}+\mathcal{A}_{S}$, where
\begin{equation}\label{32}
\mathcal{A}_{H}=\frac{1}{2}(\mathcal{A}+\mathcal{A}^{*})=\left(
                                              \begin{array}{cc}
                                                M & 0 \\
                                                0 & 0 \\
                                              \end{array}
                                            \right),\quad
\mathcal{A}_{S}=\frac{1}{2}(\mathcal{A}-\mathcal{A}^{*})=\left(
                                                      \begin{array}{cc}
                                                        0 & E \\
                                                        -E^{*} & 0 \\
                                                      \end{array}
                                                    \right).
\end{equation}
Let $\mathcal{K}_{L}$ and $\mathcal{K}_{U}$ be, respectively, the strictly lower-triangular and the strictly upper-triangular parts of $\mathcal{A}_{S}$ satisfying
\begin{equation}\label{08}
\mathcal{A}_{S}=\mathcal{K}_{L}+\mathcal{K}_{U}=\left(
                                              \begin{array}{cc}
                                                0 & 0 \\
                                                -E^{*} & 0 \\
                                              \end{array}
                                            \right)+\left(
                                                      \begin{array}{cc}
                                                        0 & E \\
                                                        0 & 0 \\
                                                      \end{array}
                                                    \right),
\end{equation}
and denote
\begin{equation}\label{21}
\mathcal{B}_{c}=\left(
                  \begin{array}{cc}
                    M & 0 \\
                    0 & B \\
                  \end{array}
                \right).
\end{equation}

In the following two subsections, we give some basic concepts and useful lemmas for the analysis of the semi-convergence properties of the GSTS iteration methods according to the choices of matrix $B$.
\subsection{Matrix $B$ is Hermitian positive definite}
Firstly, we consider the case that matrix $B$ used in Method \ref{met:1} is Hermitian positive definite. Combining iteration schemes \eqref{02} and \eqref{19}, the GSTS iteration method can be rewritten as
\begin{equation}\label{20}
u^{(k+1)}=u^{(k)}-\tau\mathcal{B}(\omega_{1},\omega_{2})^{-1}(\mathcal{A}u^{(k)}-f),
\end{equation}
where
\begin{equation}\label{25}
\mathcal{B}(\omega_{1},\omega_{2})=(\mathcal{B}_{c}+\omega_{1}\mathcal{K}_{L})\mathcal{B}_{c}^{-1}
(\mathcal{B}_{c}+\omega_{2}\mathcal{K}_{U}).
\end{equation}
The iteration matrix is
\begin{equation}\label{13}
\mathcal{G}(\omega_{1},\omega_{2},\tau)=I-\tau\mathcal{B}(\omega_{1},\omega_{2})^{-1}\mathcal{A}.
\end{equation}

Iteration scheme \eqref{20} can be induced from the splitting
\begin{equation*}
\mathcal{A}=\mathcal{M}(\omega_{1},\omega_{2},\tau)-\mathcal{N}(\omega_{1},\omega_{2},\tau),
\end{equation*}
where
\[
\mathcal{M}(\omega_{1},\omega_{2},\tau)=(1/\tau)\mathcal{B}(\omega_{1},\omega_{2}), \quad\mathcal{N}(\omega_{1},\omega_{2},\tau)=(1/\tau) \left(\mathcal{B}(\omega_{1},\omega_{2})-\tau\mathcal{A}\right).\]
Hence, matrix $\mathcal{M}(\omega_{1},\omega_{2},\tau)$, or $\mathcal{B}(\omega_{1},\omega_{2})$, can be viewed as a preconditioner for the saddle-point linear system \eqref{01}, which may be used to accelerate the convergence rate of Krylov subspace methods, such as the generalized minimum residual (GMRES) method and the quasi-minimal residual (QMR) method.

For the semi-convergence of iteration scheme \eqref{20}, we give the following useful lemma.
\begin{lemma}\cite{BermanPlemmons1994}\label{lem:6}
The iterative scheme
\[u^{(k+1)}=u^{(k)}-\mathcal{M}^{-1}(\mathcal{A}u^{(k)}-f)\]
is semi-convergent, if and only if its iteration matrix $\mathcal{G}=I-\mathcal{M}^{-1}\mathcal{A}$ satisfies
  \begin{enumerate}
    \item[(1)] The pseudo-spectral radius of matrix $\mathcal{G}$ is less than $1$, i.e., \[\gamma(\mathcal{G}):=\max \{|\lambda|:\lambda\in\sigma(\mathcal{G})\setminus{1}\}<1,\] where $\sigma(\mathcal{G})$ is the set of eigenvalues of matrix $\mathcal{G}$;
    \item[(2)] $\text{index}(I-\mathcal{G})=1$, or equivalently, $\text{rank}(I-\mathcal{G})=\text{rank}((I-\mathcal{G})^{2})$.
  \end{enumerate}
\end{lemma}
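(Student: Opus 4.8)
The plan is to reduce semi-convergence of the affine iteration to convergence of the matrix power sequence $\{\mathcal{G}^{k}\}$, and then to analyze that sequence through the Jordan canonical form of $\mathcal{G}$. Writing the scheme in fixed-point form $u^{(k+1)}=\mathcal{G}u^{(k)}+c$ with $c=\mathcal{M}^{-1}f$ and unrolling it gives
\[
u^{(k)}=\mathcal{G}^{k}u^{(0)}+\sum_{j=0}^{k-1}\mathcal{G}^{j}c .
\]
Since the initial-guess term is $\mathcal{G}^{k}u^{(0)}$ and the data term is a Neumann-type partial sum, convergence of $u^{(k)}$ for \emph{every} $u^{(0)}$ to a solution of the consistent system $\mathcal{A}u=f$ is equivalent to convergence of $\{\mathcal{G}^{k}\}$. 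I would establish this equivalence first, using the consistency hypothesis $f\in\mathrm{range}(\mathcal{A})$.

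Next I would pass to the Jordan canonical form $\mathcal{G}=TJT^{-1}$, so that $\mathcal{G}^{k}=TJ^{k}T^{-1}$ and the problem decouples over the Jordan blocks $J_{i}(\lambda)$. The block analysis is then routine: if $|\lambda|<1$ the block power tends to $0$; if $|\lambda|>1$, or $|\lambda|=1$ with $\lambda\neq1$, the block power diverges; and for $\lambda=1$ the block power converges if and only if the block is $1\times1$, since a larger block produces entries growing like $\binom{k}{j}$. Because $\mathcal{A}$ is singular we have $1\in\sigma(\mathcal{G})$, so this eigenvalue genuinely has to be treated. Collecting the cases shows that $\{\mathcal{G}^{k}\}$ converges exactly when every eigenvalue different from $1$ lies strictly inside the unit disk, which is condition (1), $\gamma(\mathcal{G})<1$, and when the eigenvalue $1$ is semisimple.

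It remains to identify semisimplicity of the eigenvalue $1$ with condition (2). Semisimplicity means all Jordan blocks of $\mathcal{G}$ at $\lambda=1$ are $1\times1$, equivalently the largest Jordan block of $I-\mathcal{G}$ at the eigenvalue $0$ has size one, i.e. $\text{index}(I-\mathcal{G})=1$. This in turn is equivalent to $\ker(I-\mathcal{G})=\ker\bigl((I-\mathcal{G})^{2}\bigr)$, which by rank–nullity is the stated identity $\text{rank}(I-\mathcal{G})=\text{rank}((I-\mathcal{G})^{2})$. The step I expect to be the most delicate is not the block computation but the first reduction: one must verify that under conditions (1)--(2) the partial sums $\sum_{j=0}^{k-1}\mathcal{G}^{j}c$ actually converge and that their limit solves $\mathcal{A}u=f$. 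The clean way is to split the space as $\ker(I-\mathcal{G})\oplus\mathrm{range}(I-\mathcal{G})$—a genuine direct sum precisely because $\text{index}(I-\mathcal{G})=1$—observe that consistency forces $c\in\mathrm{range}(I-\mathcal{G})$, and conclude that on this complementary subspace the spectral radius of $\mathcal{G}$ equals $\gamma(\mathcal{G})<1$, so the Neumann series converges there and delivers a solution.
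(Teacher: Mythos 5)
The paper gives no proof of this lemma at all---it is quoted directly from the Berman--Plemmons reference---so there is no internal argument to compare against; your proposal supplies the standard proof of that cited result, and it is correct. All the essential ingredients are present and in the right order: the reduction of semi-convergence to convergence of the power sequence $\{\mathcal{G}^{k}\}$ (with the difference-of-two-trajectories observation giving necessity), the Jordan-block case analysis yielding $\gamma(\mathcal{G})<1$ together with semisimplicity of the eigenvalue $1$, the identification of semisimplicity with $\mathrm{index}(I-\mathcal{G})=1$ and hence with $\mathrm{rank}(I-\mathcal{G})=\mathrm{rank}((I-\mathcal{G})^{2})$ via rank--nullity, and the splitting $\ker(I-\mathcal{G})\oplus\mathrm{range}(I-\mathcal{G})$ to show that consistency places $c=\mathcal{M}^{-1}f$ in the subspace on which the Neumann partial sums converge. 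One small simplification for your last step: that the limit actually solves $\mathcal{A}u=f$ follows immediately by letting $k\to\infty$ in $u^{(k+1)}=\mathcal{G}u^{(k)}+\mathcal{M}^{-1}f$ and using the invertibility of $\mathcal{M}$, so the explicit verification that $(I-\mathcal{G})$ applied to the limit equals $c$ is not needed.
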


\subsection{Matrix $B$ is singular and Hermitian positive semi-definite}

When matrix $E$ in \eqref{01} is rank deficient, the Shur complement $S_M=E^{*}M^{-1}E$ is singular and Hermitian positive semi-definite. As the approximation of $S_M$, matrix $B$ is chosen as $E^{*}P^{-1}E$, where $P$ is an approximation of $M$ and is Hermitian positive definite. Hence, matrix $B$ is singular and Hermitian positive semi-definite, and has the same null space with Shur complement $S_M$.

Owing to the singularity of matrix $B$, we replace iteration scheme \eqref{02} by \eqref{14} and obtain a more generalized GSTS iteration method. Based on \eqref{08} and \eqref{21}, the GSTS iteration method can be rewritten as
\begin{equation}\label{22}
u^{(k+1)}=u^{(k)}-\tau\mathcal{B}(\omega_{1},\omega_{2})^{\dag}(\mathcal{A}u^{(k)}-f),
\end{equation}
where
\begin{equation}
\mathcal{B}(\omega_{1},\omega_{2})=(\mathcal{B}_{c}+\omega_{1}\mathcal{K}_{L})\mathcal{B}_{c}^{\dag}
(\mathcal{B}_{c}+\omega_{2}\mathcal{K}_{U}).
\end{equation}
Iteration matrix is
\begin{equation}\label{23}
\mathcal{G}(\omega_{1},\omega_{2},\tau)=I-\tau\mathcal{B}(\omega_{1},\omega_{2})^{\dag}\mathcal{A}.
\end{equation}
Here, matrix $\mathcal{B}(\omega_{1},\omega_{2})$ can also be viewed as a preconditioner for singular saddle-point linear system \eqref{01}. The difference is that the preconditioner  $\mathcal{B}(\omega_{1},\omega_{2})$ introduced in this subsection is singular.

Comparing with iteration scheme \eqref{20}, we need one more condition to keep the semi-convergence of iteration scheme \eqref{22} since matrix $B$ is singular.
\begin{lemma}\label{lem:1}\cite{Cao20081382}
The iterative scheme
\[
u^{(k+1)}=u^{(k)}-\mathcal{M}^{\dag}(\mathcal{A}u^{(k)}-f)
\]
is semi-convergent if and only if the following three conditions are fulfilled:
  \begin{enumerate}
    \item[(1)] The pseudo-spectral radius of matrix $\mathcal{G}$ is less than $1$, i.e., $\gamma(\mathcal{G})<1$, where $\mathcal{G}\equiv I-\mathcal{M}^{\dag}\mathcal{A}$ is the iteration matrix;
    \item[(2)] \text{null}$(\mathcal{M}^{\dag}\mathcal{A})$=\text{null}$(\mathcal{A})$;
    \item[(3)] index$(I-\mathcal{G})$=1, or equivalently, rank$(I-\mathcal{G})$=rank$((I-\mathcal{G})^{2})$.
  \end{enumerate}
\end{lemma}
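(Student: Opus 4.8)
The plan is to treat the scheme as an affine stationary iteration and to reduce semi-convergence to the behaviour of the powers of $\mathcal{G}$ together with a compatibility condition on the forcing term. Writing $c=\mathcal{M}^{\dag}f$, the recursion becomes $u^{(k+1)}=\mathcal{G}u^{(k)}+c$, whose unrolled form is $u^{(k)}=\mathcal{G}^{k}u^{(0)}+\sum_{j=0}^{k-1}\mathcal{G}^{j}c$. I would record at the outset the identity $I-\mathcal{G}=\mathcal{M}^{\dag}\mathcal{A}$, so that conditions (2) and (3) are really statements about $\mathcal{M}^{\dag}\mathcal{A}$: namely $\text{null}(\mathcal{M}^{\dag}\mathcal{A})=\text{null}(\mathcal{A})$ and $\text{index}(\mathcal{M}^{\dag}\mathcal{A})=1$. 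Semi-convergence then means that $u^{(k)}$ converges, for every $u^{(0)}$, to some vector solving $\mathcal{A}u=f$, and I would split this requirement into two independent sub-problems: (a) existence of the limit, and (b) the limit being a genuine solution.

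For (a), I would invoke the classical fact (the matrix version underlying Lemma~\ref{lem:6}) that the powers $\mathcal{G}^{k}$ converge if and only if $\gamma(\mathcal{G})<1$ and the eigenvalue $1$, when present, is semisimple, the latter being equivalent to $\text{index}(I-\mathcal{G})=1$; these are exactly conditions (1) and (3). Under them $\mathcal{G}^{k}\to P$, the projector onto $\text{null}(I-\mathcal{G})$ along $\text{range}(I-\mathcal{G})$, and the direct-sum decomposition $\mathbb{C}^{p+q}=\text{null}(I-\mathcal{G})\oplus\text{range}(I-\mathcal{G})$ afforded by (3) is what makes $P$ well defined. To control the forcing term I would use consistency: since $f=\mathcal{A}\tilde{u}$ for some $\tilde{u}$, we have $c=\mathcal{M}^{\dag}\mathcal{A}\tilde{u}=(I-\mathcal{G})\tilde{u}\in\text{range}(I-\mathcal{G})$, hence $Pc=0$. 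This kills the resonant component of $\sum_{j}\mathcal{G}^{j}c$ and lets the partial sums converge to $(I-\mathcal{G})^{\#}c$ via the group inverse on $\text{range}(I-\mathcal{G})$, giving $u^{(k)}\to u^{\ast}=Pu^{(0)}+(I-\mathcal{G})^{\#}c$.

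For (b), I would observe that any such limit is a fixed point, $(I-\mathcal{G})u^{\ast}=c$, i.e. $\mathcal{M}^{\dag}\mathcal{A}(u^{\ast}-\tilde{u})=0$, so $u^{\ast}-\tilde{u}\in\text{null}(\mathcal{M}^{\dag}\mathcal{A})$. Since $\mathcal{A}\tilde{u}=f$, the vector $u^{\ast}$ solves $\mathcal{A}u=f$ precisely when $u^{\ast}-\tilde{u}\in\text{null}(\mathcal{A})$. As $u^{(0)}$ varies, the variable part $Pu^{(0)}$ sweeps out all of $\text{null}(I-\mathcal{G})$, so $u^{\ast}-\tilde{u}$ ranges over all of $\text{null}(\mathcal{M}^{\dag}\mathcal{A})$; demanding that every limit solve $\mathcal{A}u=f$ is therefore equivalent to $\text{null}(\mathcal{M}^{\dag}\mathcal{A})\subseteq\text{null}(\mathcal{A})$, and together with the trivial reverse inclusion this is exactly condition (2). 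This simultaneously yields sufficiency (conditions (1),(3) deliver the limit and (2) makes it a solution) and necessity (convergence for the homogeneous problem $f=0$ forces $\mathcal{G}^{k}$ to converge, hence (1) and (3); the limit-is-a-solution requirement then forces (2)).

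The step I expect to be the main obstacle is the careful treatment of the affine term across (a) and (b): one must be sure that consistency really places $c$ in $\text{range}(I-\mathcal{G})$ and that the index-one hypothesis is genuinely needed there (otherwise $P$ is ill defined and the partial sums pick up a linearly growing resonant term), and one must justify that letting $u^{(0)}$ range freely realizes every element of $\text{null}(\mathcal{M}^{\dag}\mathcal{A})$ through $Pu^{(0)}$, which is what converts the pointwise ``limit is a solution'' requirement into the clean subspace identity (2). The spectral and index bookkeeping in (a) is standard, so the real content lies in correctly coupling the null-space condition (2) with the fixed-point structure of the scheme.
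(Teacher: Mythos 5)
The paper itself offers no proof of this lemma: it is quoted verbatim from the cited reference \cite{Cao20081382} and used as a black box, so there is no in-paper argument to compare yours against. On its own merits, your proof is correct and is essentially the standard argument for results of this type. The decomposition into (a) existence of the limit and (b) the limit being a solution is the right one; the identification of conditions (1) and (3) with convergence of the powers $\mathcal{G}^{k}$ to the projector $P$ onto $\mathrm{null}(I-\mathcal{G})$ along $\mathrm{range}(I-\mathcal{G})$ is the classical semiconvergence criterion (the matrix fact behind Lemma~\ref{lem:6}); the observation that consistency places $c=\mathcal{M}^{\dag}\mathcal{A}\tilde{u}=(I-\mathcal{G})\tilde{u}$ in $\mathrm{range}(I-\mathcal{G})$, so that $Pc=0$ and the Neumann-type partial sums converge to $(I-\mathcal{G})^{\#}c$, is exactly what isolates the extra condition (2) as the statement that every fixed point of the affine map is a genuine solution of $\mathcal{A}u=f$. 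Your verification that $u^{\ast}-\tilde{u}$ sweeps out all of $\mathrm{null}(\mathcal{M}^{\dag}\mathcal{A})$ as $u^{(0)}$ varies (because the fixed translate $(I-\mathcal{G})^{\#}c-\tilde{u}$ already lies in that null space) is the point most often glossed over, and you handle it correctly. Two small polish items: in the necessity direction it is cleaner to difference two iterate sequences, $u^{(k)}-v^{(k)}=\mathcal{G}^{k}(u^{(0)}-v^{(0)})$, rather than appeal to ``the homogeneous problem'' (semi-convergence is asserted for the given $f$, not for all right-hand sides); and you should state explicitly that $\mathrm{range}(I-\mathcal{G})$ is $\mathcal{G}$-invariant and that the restriction of $\mathcal{G}$ to it has spectral radius $\gamma(\mathcal{G})<1$, which is what licenses the convergence of $\sum_{j}\mathcal{G}^{j}c$ to the group-inverse expression. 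Neither is a gap in substance.
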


\section{The semi-convergence of GSTS method with $B$ being Hermitian positive definite}\label{sec3}
Since matrix $B$ is nonsingular, it is easy to see that matrix $\mathcal{B}(\omega_{1},\omega_{2})$ is invertible. The inverse matrix of $\mathcal{B}(\omega_{1},\omega_{2})$ has the following explicit form
\begin{equation*}
\mathcal{B}(\omega_{1},\omega_{2})^{-1}=\left(
                                            \begin{array}{cc}
                                              M^{-1}-\omega_{1}\omega_{2}M^{-1}EB^{-1}E^{*}M^{-1} & -\omega_{2}M^{-1}EB^{-1} \\
                                              \omega_{1}B^{-1}E^{*}M^{-1} & B^{-1} \\
                                            \end{array}
                                          \right).
\end{equation*}
The iteration matrix $\mathcal{G}(\omega_{1},\omega_{2},\tau)$ can be written as
\begin{equation*}
\mathcal{G}(\omega_{1},\omega_{2},\tau)=\left(
     \begin{array}{cc}
       (1-\tau)I_{p}-\tau\omega_{2}(1-\omega_{1})M^{-1}EB^{-1}E^{*} & -\tau M^{-1}E(I_q-\omega_{1}\omega_{2}B^{-1}E^{*}M^{-1}E) \\
                                              \tau(1-\omega_{1})B^{-1}E^{*} & I_{q}-\tau\omega_{1}B^{-1}E^{*}M^{-1}E \\
     \end{array}
   \right).
\end{equation*}

In the following, we further study the semi-convergence properties of GSTS iteration method in which matrix $B$ is Hermitian positive definite. In fact, we only need to verify the two conditions presented in Lemma \ref{lem:6}.
\subsection{The conditions for index$(I-\mathcal{G}(\omega_{1},\omega_{2},\tau))=1$}
\begin{lemma}
Let matrices $\mathcal{A}$ and $\mathcal{B}(\omega_{1},\omega_{2})$ be defined by \eqref{01} and \eqref{25}, respectively. Then, we have index$(I-\mathcal{G}(\omega_{1},\omega_{2},\tau))=1$, or equivalently,
\begin{equation}\label{31}
  \text{rank}(I-\mathcal{G}(\omega_{1},\omega_{2},\tau))=\text{rank}((I-\mathcal{G}(\omega_{1},\omega_{2},\tau))^{2}).
\end{equation}
\end{lemma}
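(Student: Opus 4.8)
The plan is to exploit the invertibility of $\mathcal{B}(\omega_{1},\omega_{2})$ to reduce the index condition to a purely structural statement about $\mathcal{A}$ and $\mathcal{B}(\omega_{1},\omega_{2})$. Abbreviating $\mathcal{G}\equiv\mathcal{G}(\omega_{1},\omega_{2},\tau)$ and $\mathcal{B}\equiv\mathcal{B}(\omega_{1},\omega_{2})$, we have from \eqref{13} that $I-\mathcal{G}=\tau\mathcal{B}^{-1}\mathcal{A}$. Since $B$ is Hermitian positive definite, $\mathcal{B}$ is nonsingular, and since $\tau>0$, left multiplication by the nonsingular matrix $\mathcal{B}^{-1}$ preserves rank; hence $\text{rank}(I-\mathcal{G})=\text{rank}(\mathcal{A})$ and $\text{rank}((I-\mathcal{G})^{2})=\text{rank}(\mathcal{A}\mathcal{B}^{-1}\mathcal{A})$. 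Because $\text{rank}(\mathcal{A}\mathcal{B}^{-1}\mathcal{A})\leq\text{rank}(\mathcal{A})$ always holds, the equality \eqref{31} is equivalent to the inclusion $\text{null}((I-\mathcal{G})^{2})\subseteq\text{null}(I-\mathcal{G})$, i.e. to the implication $\mathcal{A}\mathcal{B}^{-1}\mathcal{A}v=0\Rightarrow\mathcal{A}v=0$.

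Next I would describe the null space and range of $\mathcal{A}$. Solving $\mathcal{A}(x;y)=0$ and using that $M$ is Hermitian positive definite, so that $\text{null}(S_{M})=\text{null}(E)$, gives $\text{null}(\mathcal{A})=\{(0;y):y\in\text{null}(E)\}$. The same computation applied to $\mathcal{A}^{*}$ yields $\text{null}(\mathcal{A}^{*})=\text{null}(\mathcal{A})$, whence $\text{range}(\mathcal{A})=\text{null}(\mathcal{A}^{*})^{\perp}=\mathbb{C}^{p}\times\text{range}(E^{*})$.

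Now set $w=\mathcal{A}v$. The hypothesis $\mathcal{A}\mathcal{B}^{-1}\mathcal{A}v=0$ says $\mathcal{B}^{-1}w\in\text{null}(\mathcal{A})$, so $\mathcal{B}^{-1}w=(0;z)$ with $z\in\text{null}(E)$; in particular $w=\mathcal{B}(0;z)$. Using the factored form \eqref{25} I would compute
\[
\mathcal{B}=\left(\begin{array}{cc} M & \omega_{2}E \\ -\omega_{1}E^{*} & B-\omega_{1}\omega_{2}E^{*}M^{-1}E \end{array}\right),
\]
and then, because $Ez=0$ forces $E^{*}M^{-1}Ez=0$, obtain the crucial simplification $w=\mathcal{B}(0;z)=(0;Bz)$.

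Finally, since $w=\mathcal{A}v\in\text{range}(\mathcal{A})=\mathbb{C}^{p}\times\text{range}(E^{*})$, its second block must satisfy $Bz\in\text{range}(E^{*})=\text{null}(E)^{\perp}$. As $z\in\text{null}(E)$, the inner product $\langle Bz,z\rangle$ vanishes, and the Hermitian positive definiteness of $B$ then forces $z=0$, hence $w=\mathcal{A}v=0$, which is exactly what is needed. The step I expect to be the main obstacle is the precise identification of $\text{range}(\mathcal{A})$ together with the cancellation $w=(0;Bz)$: everything hinges on combining the range constraint $Bz\perp\text{null}(E)$ with the membership $z\in\text{null}(E)$ and the definiteness of $B$, and it is this orthogonality-plus-definiteness argument, rather than the matrix algebra, that pins the index at one.
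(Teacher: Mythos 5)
Your proof is correct, but it is organized quite differently from the paper's. Both arguments begin the same way, reducing $\mathrm{index}(I-\mathcal{G})=1$ to the null-space inclusion $\mathrm{null}((\mathcal{B}^{-1}\mathcal{A})^{2})\subseteq\mathrm{null}(\mathcal{B}^{-1}\mathcal{A})$, and both ultimately lean on the Hermitian positive definiteness of $M$ and $B$. From there the paper works forward from $x$: it writes out the block entries of $y=\mathcal{B}^{-1}\mathcal{A}x$ explicitly (using the explicit formula for $\mathcal{B}(\omega_1,\omega_2)^{-1}$), derives $\mathcal{A}y=0$, uses the quadratic-form trick with $M^{-1}$ to get $Ey_{2}=0$ and $y_{1}=0$, and then substitutes back into the explicit expression $y_{2}=B^{-1}E^{*}\bigl((\omega_{1}-1)x_{1}+\omega_{1}M^{-1}Ex_{2}\bigr)$ to apply the same trick a second time with $B$. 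You instead never compute $\mathcal{B}^{-1}$ or the block form of $\mathcal{B}^{-1}\mathcal{A}$: you characterize $\mathrm{null}(\mathcal{A})=\{(0;z):z\in\mathrm{null}(E)\}$ and $\mathrm{range}(\mathcal{A})=\mathrm{null}(\mathcal{A}^{*})^{\perp}=\mathbb{C}^{p}\times\mathrm{range}(E^{*})$, observe that $w=\mathcal{A}v$ must equal $\mathcal{B}(0;z)=(0;Bz)$ with $z\in\mathrm{null}(E)$, and then kill $z$ by the orthogonality $Bz\in\mathrm{range}(E^{*})=\mathrm{null}(E)^{\perp}$ together with $z^{*}Bz>0$ for $z\neq 0$. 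Your version is cleaner in that it replaces the explicit formula for $y_{2}$ in terms of $x$ by a structural statement ($\mathrm{range}(\mathcal{A})\cap\mathcal{B}\,\mathrm{null}(\mathcal{A})=\{0\}$), at the cost of needing the identities $\mathrm{null}(\mathcal{A}^{*})=\mathrm{null}(\mathcal{A})$ and the assembled block form of $\mathcal{B}(\omega_1,\omega_2)$; the paper's computation, while more pedestrian, is the template it then reuses almost verbatim for the singular-$B$ case in Section 4, where your range argument would require some adaptation because $\mathcal{B}(\omega_1,\omega_2)^{\dag}$ replaces the inverse.
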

\begin{proof}
Inasmuch as $\mathcal{G}(\omega_{1},\omega_{2},\tau)=I-\tau\mathcal{B}(\omega_{1},\omega_{2})^{-1}\mathcal{A}$, equality \eqref{31} holds if
\[
\text{null}((\mathcal{B}(\omega_{1},\omega_{2})^{-1}\mathcal{A})^{2})=\text{null}(\mathcal{B}(\omega_{1},\omega_{2})^{-1}\mathcal{A}).
\]
It is obvious that $\text{null}((\mathcal{B}(\omega_{1},\omega_{2})^{-1}\mathcal{A})^{2})\supseteq \text{null}(\mathcal{B}(\omega_{1},\omega_{2})^{-1}\mathcal{A})$, we only need to prove
\begin{equation}\label{27}
\text{null}((\mathcal{B}(\omega_{1},\omega_{2})^{-1}\mathcal{A})^{2})\subseteq \text{null}(\mathcal{B}(\omega_{1},\omega_{2})^{-1}\mathcal{A}).
\end{equation}

Let $x=(x_{1}^{*},x_{2}^{*})^{*}\in\mathbb{C}^{p+q}$ satisfy $(\mathcal{B}(\omega_{1},\omega_{2})^{-1}\mathcal{A})^{2}x=0$. Denote $y=\mathcal{B}(\omega_{1},\omega_{2})^{-1}\mathcal{A}x$, then simple calculation gives
\begin{equation}\label{26}
y=\left(
     \begin{array}{c}
       y_{1} \\
       y_{2} \\
     \end{array}
   \right)=\left(
               \begin{array}{cc}
                (I_{p}+\omega_{2}(1-\omega_{2})M^{-1}EB^{-1}E^{*})x_{1}+M^{-1}E(I_q-\omega_{1}\omega_{2}B^{-1}E^{*}M^{-1}E)x_{2} \\
                B^{-1}E^{*}\left((\omega_{1}-1)x_{1}+\omega_{1}M^{-1}Ex_{2}\right)
                \end{array}\right).
\end{equation}
In the following, we only need to prove $y=0$.
From $\mathcal{B}(\omega_{1},\omega_{2})^{-1}\mathcal{A}y=(\mathcal{B}(\omega_{1},\omega_{2})^{-1}\mathcal{A})^{2}x=0$, we have $\mathcal{A}y=0$, i.e.,
\begin{equation}\label{24}
My_{1}+Ey_{2}=0 \quad\text{and}\quad -E^{*}y_{1}=0.
\end{equation}
Note that $M$ is nonsingular, solving $y_{1}$ from the first equality of \eqref{24} and taking it into the second equality, it follows that $E^{*}M^{-1}Ey_{2}=0$. Hence,
\begin{equation*}
(Ey_{2})^{*}M^{-1}(Ey_{2})=y_{2}(E^{*}M^{-1}Ey_{2})=0.
\end{equation*}
Owing to the Hermitian positive definiteness of matrix $M^{-1}$, we can obtain that $Ey_{2}=0$. Taking it into the first equality of \eqref{24} gives $y_{1}=0$. Furthermore, using $Ey_{2}=0$ and \eqref{26}, we have
\[
Ey_{2}=EB^{-1}E^{*}\left((\omega_{1}-1)x_{1}+\omega_{1}M^{-1}Ex_{2}\right)=0.
\]
Since matrix $B$ is Hermitian positive definite, we can derive, with similar technique, that
\[E^{*}\left((\omega_{1}-1)x_{1}+\omega_{1}M^{-1}Ex_{2}\right)=0,\]
which means
\[y_2=B^{-1}E^{*}\left((\omega_{1}-1)x_{1}+\omega_{1}M^{-1}Ex_{2}\right)=0.\]
Thus, $\mathcal{B}(\omega_{1},\omega_{2})^{-1}\mathcal{A}x=y=0$, i.e., the inclusion relation \eqref{27} holds.
\end{proof}

\subsection{The conditions for $\gamma(\mathcal{G}(\omega_{1},\omega_{2},\tau))<1$}
Assume that the column rank of $E$ is $r$, i.e., $r=\text{rank}(E)$. Let
\begin{equation}\label{10}
  E=U(E_{r},0)V^{*}
\end{equation}
be the singular value decomposition of $E$, where $U\in\mathbb{C}^{p\times p}$ and $V\in\mathbb{C}^{q\times q}$ are two unitary matrices,
$E_{r}=(\Sigma_{r},0)^*\in\mathbb{C}^{p\times r}$
and $\Sigma_{r}=\text{diag}(\sigma_{1}, \sigma_{2}, \cdots, \sigma_{r})$, with $\sigma_{i}$ being the singular value of matrix $E$.


We partition matrix $V$ as $V=(V_{1},V_{2})$ with $V_{1}\in\mathbb{C}^{q\times r}$, $V_{2}\in\mathbb{C}^{q\times (q-r)}$ and define
\begin{equation}\label{17}
\mathcal{P}=\left(
              \begin{array}{cc}
                U & 0 \\
                0 & V \\
              \end{array}
            \right).
\end{equation}
It is obvious that $\mathcal{P}$ is a $(p+q)\times(p+q)$ unitary matrix, and the iteration matrix $\mathcal{G}(\omega_{1},\omega_{2},\tau)$ is unitarily similar to the matrix $\hat{\mathcal{G}}(\omega_{1},\omega_{2},\tau)=\mathcal{P}^{*}\mathcal{G}(\omega_{1},\omega_{2},\tau)\mathcal{P}$. Hence, the pseudo-spectral radii of matrices $\hat{\mathcal{G}}(\omega_{1},\omega_{2},\tau)$ and $\mathcal{G}(\omega_{1},\omega_{2},\tau)$ are same, we in the following only need to analyze the pseudo-spectral radius of matrix $\hat{\mathcal{G}}(\omega_{1},\omega_{2},\tau)$.

Denoting $\hat{M}=U^{*}MU$ and $\hat{B}=V^{*}BV$, we have
\begin{equation}\label{28}
\hat{B}^{-1}=\left(\begin{array}{cc}
\hat{B}^{-1}_{11} & \hat{B}^{-1}_{12} \\
\hat{B}^{-1}_{21} & \hat{B}^{-1}_{22} \\\end{array}
\right)=\left(\begin{array}{cc}
V^{*}_{1}B^{-1}V_{1} & V^{*}_{1}B^{-1}V_{2} \\
V^{*}_{2}B^{-1}V_{1} & V^{*}_{2}B^{-1}V_{2} \\
\end{array}\right).
\end{equation}
Furthermore, we can derive that
\begin{equation*}
\hat{\mathcal{G}}(\omega_{1},\omega_{2},\tau)=\left(
     \begin{array}{cc}
       \hat{\mathcal{G}}_{1}(\omega_{1},\omega_{2},\tau) & 0 \\
       \hat{\mathcal{L}}(\omega_{1},\omega_{2},\tau) & I_{q-r} \\
     \end{array}
   \right),
\end{equation*}
where
\begin{equation*}
\hat{\mathcal{G}}_{1}(\omega_{1},\omega_{2},\tau)=\left(
                                                    \begin{array}{cc}
       (1-\tau)I_{p}-\tau\omega_{2}(1-\omega_{1})\hat{M}^{-1}E_{r}\hat{B}^{-1}_{11}E^{*}_{r} & -\tau \hat{M}^{-1}E_{r}(I_q-\omega_{1}\omega_{2}\hat{B}^{-1}_{11}E^{*}_{r}\hat{M}^{-1}E_{r}) \\
                                              \tau(1-\omega_{1})\hat{B}^{-1}_{11}E^{*}_{r} & I_{q}-\tau\omega_{1}\hat{B}^{-1}_{11}E^{*}_{r}\hat{M}^{-1}E_{r} \\
                                                    \end{array}
                                                  \right)
\end{equation*}
and
\begin{equation*}
\hat{\mathcal{L}}(\omega_{1},\omega_{2},\tau)=\left(
                                                \begin{array}{cc}
                                                 \tau(1-\omega_{1})\hat{B}^{-1}_{21}E^{*}_{r} & -\tau\omega_{1}\hat{B}^{-1}_{21}E^{*}_{r}\hat{M}^{-1}E_{r} \\
                                                \end{array}
                                              \right).
\end{equation*}
Then, $\gamma(\hat{\mathcal{G}}(\omega_{1},\omega_{2},\tau))<1$ holds if we have $\rho(\hat{\mathcal{G}}_{1}(\omega_{1},\omega_{2},\tau))<1$.

Note that $\hat{\mathcal{G}}_{1}(\omega_{1},\omega_{2},\tau)$ is the iteration matrix of GSTS iteration method applied to the nonsingular saddle-point problem
\begin{equation}\label{15}
\hat{\mathcal{A}}\hat{u}:=\left(
  \begin{array}{cc}
    \hat{M} & E_{r} \\
    -E^{*}_{r} & 0 \\
  \end{array}
\right)\left(
         \begin{array}{c}
           \hat{u}_{1} \\
           \hat{u}_{2} \\
         \end{array}
       \right)=\left(
                 \begin{array}{c}
                   \hat{f}_{1} \\
                   \hat{f}_{2} \\
                 \end{array}
               \right)=:\hat{f}.
\end{equation}
Moreover, in the iteration process, we have
\begin{equation}\label{16}
\hat{\mathcal{B}}(\omega_{1},\omega_{2})=(\hat{\mathcal{B}}_{c}+\omega_{1}\hat{\mathcal{K}}_{L})\hat{\mathcal{B}}_{c}^{-1}(\hat{\mathcal{B}}_{c}+\omega_{2}\hat{\mathcal{K}}_{U})\quad \text{and}\quad\hat{\mathcal{B}}_{c}=\left(
                        \begin{array}{cc}
                          \hat{M} & 0 \\
                          0 & \hat{B}_{11} \\
                        \end{array}
                      \right),
\end{equation}
where $\hat{B}_{11}\in \mathbb{C}^{r\times r}$ defined in \eqref{28} is Hermitian positive definite, and $\hat{\mathcal{K}}_{L}$ and $\hat{\mathcal{K}}_{U}$ are the strictly lower-triangular and the strictly upper-triangular parts of $\hat{\mathcal{A}}_{S}=(1/2)(\hat{\mathcal{A}}-\hat{\mathcal{A}}^{*})$, respectively.

For convenience, we denote by
\begin{equation*}
\alpha:=\frac{z^{*}E_{r}^{*}\hat{M}^{-1}E_{r}z}{z^{*}z}\quad \text{and} \quad \beta_1:=\frac{z^{*}\hat{B}_{11}z}{z^{*}z}.
\end{equation*}
By making use of Theorem 3.3 in \cite{Krukier2013}, we derive the following result.
\begin{lemma}\label{lem:7}
Denote $\tilde{\omega}=(\omega_{1}-1)(\omega_{2}-1)$. Let matrices $\mathcal{A}$ and $\mathcal{B}(\omega_{1},\omega_{2})$ be defined by \eqref{01} and \eqref{25}, respectively. Then $\gamma(\mathcal{G}(\omega_{1},\omega_{2},\tau))<1$ holds, provided that the parameters $\omega_{1}$, $\omega_2$ satisfy
\[
\tilde{\omega}<\frac{\alpha+\beta_1}{\alpha},
\]
and the parameter $\tau$ satisfies
\begin{enumerate}
  \item[(a)] if $[\beta_1+(1-\tilde{\omega})\alpha]^{2}-4\alpha\beta_1\leq 0$, then
  \[
  0<\tau<\frac{\beta_1+(1-\tilde{\omega})\alpha}{\alpha};
  \]
  \item[(b)] if $[\beta_1+(1-\tilde{\omega})\alpha]^{2}-4\alpha\beta_1> 0$, then
  \[
  0<\tau<\frac{\beta_1+(1-\tilde{\omega})\alpha-\sqrt{[\beta_1+(1-\tilde{\omega})\alpha]^{2}
  -4\alpha\beta_1}}{\alpha}.
  \]
\end{enumerate}
\end{lemma}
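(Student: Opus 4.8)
The plan is to reduce the problem to the already-established nonsingular case. As the text preceding the statement shows, after the unitary transformation by $\mathcal{P}$ the iteration matrix $\hat{\mathcal{G}}(\omega_{1},\omega_{2},\tau)$ is block lower-triangular with diagonal blocks $\hat{\mathcal{G}}_{1}(\omega_{1},\omega_{2},\tau)$ and $I_{q-r}$. The eigenvalue $1$ coming from the identity block is exactly the one excluded in the definition of the pseudo-spectral radius, so I would first observe that $\gamma(\mathcal{G}(\omega_{1},\omega_{2},\tau))=\gamma(\hat{\mathcal{G}}(\omega_{1},\omega_{2},\tau))<1$ holds provided $\rho(\hat{\mathcal{G}}_{1}(\omega_{1},\omega_{2},\tau))<1$, a reduction already stated in the excerpt. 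Thus the whole task collapses to bounding the spectral radius of $\hat{\mathcal{G}}_{1}$.

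The key point I would emphasize next is that $\hat{\mathcal{G}}_{1}(\omega_{1},\omega_{2},\tau)$ is precisely the iteration matrix of the \emph{nonsingular} GSTS method applied to the system $\hat{\mathcal{A}}\hat{u}=\hat{f}$ in \eqref{15}, with $\hat{M}$ Hermitian positive definite, $E_{r}$ of full column rank, and $\hat{B}_{11}$ Hermitian positive definite (as recorded after \eqref{16}). Hence Theorem 3.3 of \cite{Krukier2013}, which gives the convergence region of the original GSTS method for nonsingular saddle-point problems, applies verbatim. I would invoke that theorem with the Hermitian positive definite data $(\hat{M},E_{r},\hat{B}_{11})$ and read off its convergence conditions.

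The remaining step is to translate the abstract convergence conditions of that theorem into the quantitative statement involving $\alpha$, $\beta_{1}$, $\tilde{\omega}$ and $\tau$. The theorem's conditions are naturally phrased in terms of the eigenvalues of $E_{r}^{*}\hat{M}^{-1}E_{r}$ relative to $\hat{B}_{11}$; by the Rayleigh-quotient definitions $\alpha=z^{*}E_{r}^{*}\hat{M}^{-1}E_{r}z/(z^{*}z)$ and $\beta_{1}=z^{*}\hat{B}_{11}z/(z^{*}z)$, requiring the condition to hold for every admissible $z$ amounts to requiring $\tilde{\omega}<(\alpha+\beta_{1})/\alpha$ together with the stated bound on $\tau$, where the two cases (a) and (b) correspond to whether the discriminant $[\beta_{1}+(1-\tilde{\omega})\alpha]^{2}-4\alpha\beta_{1}$ is nonpositive or positive. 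I would verify that these are exactly the inequalities that guarantee $\rho(\hat{\mathcal{G}}_{1})<1$ uniformly over the relevant spectral data.

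The main obstacle I anticipate is the bookkeeping in the last step: making sure the parameter ranges obtained from Theorem 3.3 for a \emph{fixed} generalized eigenvalue pair $(\alpha,\beta_{1})$ can be intersected over all such pairs to yield a single clean condition, and confirming that $\alpha$ and $\beta_{1}$ as defined (with the supremum/infimum over $z$ implicitly taken) produce precisely the tightest admissible bounds on $\tau$ and $\tilde{\omega}$. The algebra relating the discriminant sign to cases (a) and (b) is routine once the correspondence with the cited theorem is pinned down, so the conceptual work is entirely in the reduction and in identifying the correct extremal $z$.
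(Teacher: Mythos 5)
Your proposal matches the paper's argument: the paper likewise reduces via the unitary similarity $\mathcal{P}^{*}\mathcal{G}\mathcal{P}$ to the block lower-triangular form, observes that $\hat{\mathcal{G}}_{1}$ is the GSTS iteration matrix for the nonsingular problem \eqref{15} with Hermitian positive definite $\hat{B}_{11}$, and then invokes Theorem 3.3 of \cite{Krukier2013} to obtain the stated conditions on $\tilde{\omega}$ and $\tau$. The bookkeeping concern you raise about the Rayleigh quotients is real but is glossed over in exactly the same way by the paper, which cites the nonsingular result without further elaboration.
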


Using Lemma \ref{lem:6} and combining the above analyses, we finally obtain the following semi-convergence properties of GSTS iteration method. \begin{theorem}
  Let parameters $\omega_1$, $\omega_2$ and $\tau$ satisfy the conditions of Lemma \ref{lem:7} and matrix $B$, as an approximation of Shur complement $S_M$, be Hermitian positive definite. Then, the GSTS iteration method used for solving singular saddle-point linear system \eqref{01} is semi-convergent.
\end{theorem}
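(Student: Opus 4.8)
The plan is to recognize this theorem as a direct synthesis of the two semi-convergence criteria furnished by Lemma~\ref{lem:6}, each of which has already been verified in the two preceding subsections. Since matrix $B$ is Hermitian positive definite, $\mathcal{B}(\omega_{1},\omega_{2})$ defined in~\eqref{25} is invertible, so the GSTS iteration takes the form~\eqref{20} with splitting matrix $\mathcal{M}(\omega_{1},\omega_{2},\tau)=(1/\tau)\mathcal{B}(\omega_{1},\omega_{2})$ and iteration matrix $\mathcal{G}(\omega_{1},\omega_{2},\tau)=I-\tau\mathcal{B}(\omega_{1},\omega_{2})^{-1}\mathcal{A}$. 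By Lemma~\ref{lem:6}, semi-convergence of this scheme is equivalent to the two conditions $\gamma(\mathcal{G}(\omega_{1},\omega_{2},\tau))<1$ and $\text{index}(I-\mathcal{G}(\omega_{1},\omega_{2},\tau))=1$.

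First I would invoke the lemma established earlier in this section, which proves~\eqref{31}, i.e.\ that $\text{index}(I-\mathcal{G}(\omega_{1},\omega_{2},\tau))=1$ holds; its argument rests only on the nonsingularity of $M$ and the positive definiteness of $B$, both of which are in force here, so the conclusion is valid for every admissible choice of the iteration parameters. This discharges condition~(2) of Lemma~\ref{lem:6}. Next I would invoke Lemma~\ref{lem:7}, which, under the stated restrictions $\tilde{\omega}<(\alpha+\beta_1)/\alpha$ on $\omega_{1}$, $\omega_{2}$ together with the accompanying bounds on $\tau$ in cases (a) and (b), guarantees $\gamma(\mathcal{G}(\omega_{1},\omega_{2},\tau))<1$. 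This discharges condition~(1). With both conditions of Lemma~\ref{lem:6} simultaneously met, the iteration scheme~\eqref{20} is semi-convergent, which is exactly the assertion of the theorem.

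Because all the genuine work has already been carried out in the two supporting results, there is no real analytic obstacle at this final stage; the only point requiring care is to observe that the index condition holds independently of the parameter values, so that imposing the restrictions of Lemma~\ref{lem:7} to secure $\gamma<1$ creates no conflict with condition~(2). The substantive difficulties lie upstream—above all in Lemma~\ref{lem:7}, whose proof reduces the singular problem, via the singular value decomposition~\eqref{10} and the block triangular form of $\hat{\mathcal{G}}(\omega_{1},\omega_{2},\tau)$, to the nonsingular block $\hat{\mathcal{G}}_{1}(\omega_{1},\omega_{2},\tau)$ and then appeals to Theorem~3.3 of~\cite{Krukier2013}.
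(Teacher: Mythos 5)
Your proposal is correct and follows exactly the paper's route: the theorem is stated there as an immediate consequence of Lemma \ref{lem:6}, combining the index lemma of the first subsection (condition (2)) with Lemma \ref{lem:7} (condition (1)). Your added remark that the index condition holds independently of the parameter values, so no conflict arises with the restrictions of Lemma \ref{lem:7}, is a sound observation that the paper leaves implicit.
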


\section{The semi-convergence of GSTS method with $B$ being singular and Hermitian positive semi-definite}\label{sec4}
In this section, we particularly choose matrix $B$ as $B=E^{*}P^{-1}E$, where $P$, as an approximation of $M$, is Hermitian positive definite. Hence, matrix $B$ is singular and has the same null space with Shur complement $S_M$.

In this case, matrix $\mathcal{B}_{c}$ defined in \eqref{13} is singular. We can write matrix $\mathcal{B}(\omega_{1},\omega_{2})$ as
\begin{equation}\label{11}
\begin{split}
\mathcal{B}(\omega_{1},\omega_{2})
&=\left(
     \begin{array}{cc}
       M & 0 \\
       -\omega_{1}E^{*} & B \\
     \end{array}
   \right) \left(
            \begin{array}{cc}
              M^{-1} & 0 \\
              0 & B^{\dag}
            \end{array}
          \right) \left(
                   \begin{array}{cc}
                     M & \omega_{2}E \\
                     0 & B \\
                   \end{array}
                   \right) \\
&=\left(
     \begin{array}{cc}
       M & \omega_{2}E \\
       -\omega_{1}E^{*} & B-\omega_{1}\omega_{2}E^{*}M^{-1}E \\
     \end{array}
   \right),
\end{split}
\end{equation}
where $B^{\dag}$ is the Moore-Penrose inverse of $B$. Since $BB^{\dag}E^{*}=B^{\dag}BE^{*}=E^{*}$ \cite{ZhangShen2013116}, the Moore-Penrose inverse of singular matrix $\mathcal{B}(\omega_{1},\omega_{2})$ has the form of
\begin{equation}\label{12}
\mathcal{B}(\omega_{1},\omega_{2})^{\dag}=\left(
                                            \begin{array}{cc}
                                              M^{-1}-\omega_{1}\omega_{2}M^{-1}EB^{\dag}E^{*}M^{-1} & -\omega_{2}M^{-1}EB^{\dag} \\
                                              \omega_{1}B^{\dag}E^{*}M^{-1} & B^{\dag} \\
                                            \end{array}
                                          \right).
\end{equation}



In the following subsections, we analyze the semi-convergence properties of GSTS iteration method according to Lemma \ref{lem:1}.
\subsection{The conditions for $\gamma(\mathcal{G}(\omega_{1},\omega_{2},\tau))<1$}
Based on the singular value decomposition of $E$ defined in \eqref{10}, we have
\begin{eqnarray*}
B=E^{*}P^{-1}E&=&(V_{1},V_{2})\left(
                                    \begin{array}{cc}
                                      \Sigma_{r} & 0 \\
                                      0 & 0 \\
                                    \end{array}
                                  \right)\left(
                                           \begin{array}{c}
                                             U_{1}^{*} \\
                                             U_{2}^{*} \\
                                           \end{array}
                                         \right)P^{-1}(U_{1},U_{2})\left(
                                    \begin{array}{cc}
                                      \Sigma_{r} & 0 \\
                                      0 & 0 \\
                                    \end{array}
                                  \right)\left(
                                           \begin{array}{c}
                                             V_{1}^{*} \\
                                             V_{2}^{*} \\
                                           \end{array}
                                         \right)\\
&=&(V_{1},V_{2})\left(
                                    \begin{array}{cc}
                                      \Sigma_{r}\hat{P}\Sigma_{r} & 0 \\
                                      0 & 0 \\
                                    \end{array}
                                  \right)\left(
                                           \begin{array}{c}
                                             V_{1}^{*} \\
                                             V_{2}^{*} \\
                                           \end{array}
                                         \right),
\end{eqnarray*}
where $\hat{P}=U_{1}^{*}P^{-1}U_{1}$. The Moore-Penrose inverse of $B$ can be written as
\begin{equation*}
B^{\dag}=(V_{1},V_{2})\left(
                        \begin{array}{cc}
                                     (\Sigma_{r}\hat{P}\Sigma_{r})^{-1} & 0 \\
                                      0 & 0 \\
                                    \end{array}
                                  \right)\left(
                                           \begin{array}{c}
                                             V_{1}^{*} \\
                                             V_{2}^{*} \\
                                           \end{array}
                                         \right).
\end{equation*}
Using the unitary matrix $\mathcal{P}$ defined in \eqref{17}, iteration matrix $\mathcal{G}(\omega_{1},\omega_{2},\tau)$ is unitarily similar to the matrix $\hat{\mathcal{G}}(\omega_{1},\omega_{2},\tau)=\mathcal{P}^{*}\mathcal{G}(\omega_{1},\omega_{2},\tau)\mathcal{P}$. Hence, we in this subsection only need to analyze $\gamma(\hat{\mathcal{G}}(\omega_{1},\omega_{2},\tau))<1$.

Define matrices $\hat{M}=U^{*}MU$ and $\hat{S}_P=\Sigma_{r}\hat{P}\Sigma_{r}$, then
\begin{equation*}
\hat{\mathcal{G}}(\omega_{1},\omega_{2},\tau)=\mathcal{P}^{*}\mathcal{G}(\omega_{1},\omega_{2},\tau)\mathcal{P}
=\left(
     \begin{array}{cc}
       \hat{\mathcal{G}}_{1}(\omega_{1},\omega_{2},\tau) & 0 \\
       \hat{\mathcal{L}}(\omega_{1},\omega_{2},\tau) & I_{q-r} \\
     \end{array}
   \right),
\end{equation*}
where
\begin{equation*}
\hat{\mathcal{G}}_{1}(\omega_{1},\omega_{2},\tau)=\left(
                                                    \begin{array}{cc}
       (1-\tau)I_{p}-\tau\omega_{2}(1-\omega_{1})\hat{M}^{-1}E_{r}\hat{S}_P^{-1}E^{*}_{r} & -\tau \hat{M}^{-1}E_{r}+\tau\omega_{1}\omega_{2}\hat{M}^{-1}E_{r}\hat{S}_P^{-1}E^{*}_{r}\hat{M}^{-1}E_{r} \\
                                              \tau(1-\omega_{1})\hat{S}_P^{-1}E^{*}_{r} & I_{q}-\tau\omega_{1}\hat{S}_P^{-1}E^{*}_{r}\hat{M}^{-1}E_{r} \\
                                                    \end{array}
                                                  \right)
\end{equation*}
and
\begin{equation*}
\hat{\mathcal{L}}(\omega_{1},\omega_{2},\tau)=\left(
                                                \begin{array}{cc}
                                                 \tau(1-\omega_{1})V_{2}^{*}V_{1}\hat{S}_P^{-1}E^{*}_{r} & -\tau\omega_{1}V_{2}^{*}V_{1}\hat{S}_P^{-1}E^{*}_{r}\hat{M}^{-1}E_{r} \\
                                                \end{array}
                                              \right).
\end{equation*}
As $E_{r}$ is of full column rank and $\hat{S}_P^{-1}$ is nonsingular, then $\hat{\mathcal{L}}(\omega_{1},\omega_{2},\tau)\neq 0$, so $\gamma(\hat{\mathcal{G}}(\omega_{1},\omega_{2},\tau))<1$ if and only if $\rho(\hat{\mathcal{G}}_{1}(\omega_{1},\omega_{2},\tau))<1$.

Analogously, $\hat{\mathcal{G}}_{1}(\omega_{1},\omega_{2},\tau)$ is the iteration matrix of the GSTS iteration method applied for the nonsingular saddle-point problem
\begin{equation}\label{03}
\hat{\mathcal{A}}\hat{u}=\left(
  \begin{array}{cc}
    \hat{M} & E_{r} \\
    -E^{*}_{r} & 0 \\
  \end{array}
\right)\left(
         \begin{array}{c}
           \hat{u}_{1} \\
           \hat{u}_{2} \\
         \end{array}
       \right)=\left(
                 \begin{array}{c}
                   \hat{f}_{1} \\
                   \hat{f}_{2} \\
                 \end{array}
               \right)=\hat{f},
\end{equation}
where $\hat{M}$ is a Hermitian positive definite matrix, $E_{r}$ is of full column rank and
\begin{equation}\label{04}
\hat{\mathcal{B}}(\omega_{1},\omega_{2})=(\hat{\mathcal{B}}_{c}+\omega_{1}\hat{\mathcal{K}}_{L})\hat{\mathcal{B}}_{c}^{-1}(\hat{\mathcal{B}}_{c}
+\omega_{2}\hat{\mathcal{K}}_{U}),
\end{equation}
with
\[\hat{\mathcal{B}}_{c}=\left(
                        \begin{array}{cc}
                          \hat{M} & 0 \\
                          0 & \hat{S}_P \\
                        \end{array}
                      \right)\]
being Hermitian positive definite since $\hat{S}_P\in \mathbb{C}^{r\times r}$ is Hermitian positive definite.

Under this situation, we denote by
\begin{equation*}
\alpha:=\frac{z^{*}E_{r}^{*}\hat{M}^{-1}E_{r}z}{z^{*}z}\quad \text{and} \quad \beta_{2}:=\frac{z^{*}\hat{S}_Pz}{z^{*}z},
\end{equation*}
By making use of Theorem 3.3 in \cite{Krukier2013}, we derive the following result.
\begin{lemma}\label{lem:2}
Denote $\tilde{\omega}=(\omega_{1}-1)(\omega_{2}-1)$. Let matrices $\mathcal{A}$ and $\mathcal{B}(\omega_{1},\omega_{2})$ be defined by \eqref{01} and \eqref{11}, respectively, and $B$=$E^{*}P^{-1}E$. Then $\gamma(\mathcal{G})<1$ holds, provided that the parameters $\omega_{1}$, $\omega_{2}$ satisfy
\[
\tilde{\omega}<\frac{\alpha+\beta_{2}}{\alpha},
\]
and the parameter $\tau$ satisfies
\begin{enumerate}
  \item[(a)] if $[\beta_{2}+(1-\tilde{\omega})\alpha]^{2}-4\alpha\beta_{2}\leq 0$, then
  \[
     0<\tau<\frac{\beta_{2}+(1-\tilde{\omega})\alpha}{\alpha};
  \]
  \item[(b)] if $[\beta_{2}+(1-\tilde{\omega})\alpha]^{2}-4\alpha\beta_{2}> 0$, then
  \[
     0<\tau<\frac{\beta_{2}+(1-\tilde{\omega})\alpha-\sqrt{[\beta_{2}+(1-\tilde{\omega})\alpha]^{2}-4\alpha\beta_{2}}}{\alpha}.
  \]
  \end{enumerate}
\end{lemma}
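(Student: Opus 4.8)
The plan is to build on the reductions already in place. The unitary similarity $\hat{\mathcal{G}}=\mathcal{P}^{*}\mathcal{G}\mathcal{P}$ preserves the spectrum, so it suffices to control $\gamma(\hat{\mathcal{G}})$, and the block lower-triangular form of $\hat{\mathcal{G}}$ reduces this to an ordinary spectral-radius bound on the diagonal block $\hat{\mathcal{G}}_{1}$. First I would record precisely why $\gamma(\hat{\mathcal{G}})<1$ is equivalent to $\rho(\hat{\mathcal{G}}_{1})<1$: the spectrum of the block-triangular matrix $\hat{\mathcal{G}}$ is $\sigma(\hat{\mathcal{G}}_{1})\cup\{1\}$, where the eigenvalue $1$ comes from the trailing block $I_{q-r}$; since $\hat{\mathcal{A}}$ in \eqref{03} is nonsingular, $\hat{\mathcal{G}}_{1}=I-\tau\hat{\mathcal{B}}^{-1}\hat{\mathcal{A}}$ cannot have $1$ as an eigenvalue, so every eigenvalue of $\hat{\mathcal{G}}_{1}$ survives in the pseudo-spectral radius while the eigenvalue $1$ is exactly what $\gamma$ discards. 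Hence $\gamma(\hat{\mathcal{G}})=\rho(\hat{\mathcal{G}}_{1})$.

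Next I would verify that $\hat{\mathcal{G}}_{1}$ is genuinely the GSTS iteration matrix for the nonsingular saddle-point system \eqref{03}, so that the nonsingular theory applies verbatim. This requires checking three structural hypotheses: $\hat{M}=U^{*}MU$ is Hermitian positive definite, $E_{r}$ has full column rank $r$, and the preconditioner block $\hat{S}_{P}=\Sigma_{r}\hat{P}\Sigma_{r}$ is Hermitian positive definite---the last because it is a congruence of the Hermitian positive definite matrix $\hat{P}=U_{1}^{*}P^{-1}U_{1}$ through the nonsingular diagonal matrix $\Sigma_{r}$. With these in hand, $\hat{\mathcal{B}}(\omega_{1},\omega_{2})$ in \eqref{04} has exactly the GSTS form, with $\hat{S}_{P}$ taking the role of the preconditioner $B$ in \cite{Krukier2013}.

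The third step is to apply Theorem 3.3 of \cite{Krukier2013}. The eigenvalues of $\hat{\mathcal{G}}_{1}$ are $1-\tau\mu$, where $\mu$ ranges over the generalized eigenvalues of the pencil $(\hat{\mathcal{A}},\hat{\mathcal{B}})$, and $|1-\tau\mu|<1$ holds precisely when $\mathrm{Re}(\mu)>0$ and $0<\tau<2\,\mathrm{Re}(\mu)/|\mu|^{2}$. The analysis of \cite{Krukier2013} reduces these generalized eigenvalues to a scalar quadratic whose coefficients are governed by the Rayleigh quotients $\alpha$ and $\beta_{2}$; the sign of its discriminant $[\beta_{2}+(1-\tilde{\omega})\alpha]^{2}-4\alpha\beta_{2}$ separates the real-eigenvalue case from the complex-conjugate case, producing the two alternatives (a) and (b) for the admissible range of $\tau$, while positivity of $\mathrm{Re}(\mu)$ across the spectrum yields the single constraint $\tilde{\omega}<(\alpha+\beta_{2})/\alpha$. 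Transcribing these scalar conditions back to the full matrix gives the stated bounds.

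The main obstacle I anticipate is the faithful correspondence in this last step: confirming that the spectral quantities on which Theorem 3.3 is phrased coincide with the Rayleigh quotients $\alpha$ and $\beta_{2}$ defined here, and, crucially, that the parameter restrictions must be enforced uniformly over the entire spectrum---that is, for every admissible $z\neq 0$, equivalently every singular-value subproblem---so that the worst case is covered rather than a single eigenvalue. The delicate point is therefore not any one computation but the uniformity and positivity bookkeeping: ensuring no generalized eigenvalue with $\mathrm{Re}(\mu)\le 0$ escapes the condition $\tilde{\omega}<(\alpha+\beta_{2})/\alpha$, and that the $\tau$-interval extracted from the discriminant analysis is taken small enough to work simultaneously for all such eigenvalues.
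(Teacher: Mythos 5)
Your proposal is correct and follows essentially the same route as the paper: reduce via the singular value decomposition and the unitary similarity $\hat{\mathcal{G}}=\mathcal{P}^{*}\mathcal{G}\mathcal{P}$ to the block lower-triangular form, identify $\hat{\mathcal{G}}_{1}$ as the GSTS iteration matrix of the nonsingular reduced saddle-point system \eqref{03} with Hermitian positive definite $\hat{\mathcal{B}}_{c}=\mathrm{diag}(\hat{M},\hat{S}_{P})$, and invoke Theorem 3.3 of \cite{Krukier2013}. Your added observation that $1\notin\sigma(\hat{\mathcal{G}}_{1})$ because $I-\hat{\mathcal{G}}_{1}=\tau\hat{\mathcal{B}}^{-1}\hat{\mathcal{A}}$ is nonsingular, so that $\gamma(\hat{\mathcal{G}})=\rho(\hat{\mathcal{G}}_{1})$, is a small but welcome sharpening of the paper's reduction.
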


\subsection{The conditions for $\text{null}(\mathcal{M}^{\dag}\mathcal{A})=\text{null}(\mathcal{A})$}
From iteration scheme \eqref{22}, we have $\mathcal{M}=\tau\mathcal{B}(\omega_{1},\omega_{2})$,
which means
\[\text{null}(\mathcal{M}^{\dag}\mathcal{A})=\text{null}(\mathcal{B}(\omega_{1},\omega_{2})^{\dag}\mathcal{A}).\]
In the following, we only need to verify $\text{null}(\mathcal{B}(\omega_{1},\omega_{2})^{\dag}\mathcal{A})=\text{null}(\mathcal{A})$.
\begin{lemma}\label{lem:3}
Let matrices $\mathcal{A}$ and $\mathcal{B}(\omega_{1},\omega_{2})$ be defined by \eqref{01} and \eqref{11}, respectively, and $B$=$E^{*}P^{-1}E$ with $P$ being Hermitian positive definite. Then $\text{null}(\mathcal{B}(\omega_{1},\omega_{2})^{\dag}\mathcal{A})=\text{null}(\mathcal{A})$.
\end{lemma}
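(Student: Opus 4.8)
The plan is to prove the two inclusions $\text{null}(\mathcal{A})\subseteq\text{null}(\mathcal{B}(\omega_{1},\omega_{2})^{\dag}\mathcal{A})$ and its reverse, the first of which is immediate. For orientation I would first record $\text{null}(\mathcal{A})$ explicitly: if $\mathcal{A}x=0$ with $x=(x_{1}^{*},x_{2}^{*})^{*}$, then $Mx_{1}+Ex_{2}=0$ and $E^{*}x_{1}=0$; solving the first relation for $x_{1}=-M^{-1}Ex_{2}$ and inserting it into the second gives $E^{*}M^{-1}Ex_{2}=0$, whence $(Ex_{2})^{*}M^{-1}(Ex_{2})=0$ and, by the positive definiteness of $M^{-1}$, $Ex_{2}=0$ and then $x_{1}=0$. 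Thus $\text{null}(\mathcal{A})=\{0\}\times\text{null}(E)$. Since $\mathcal{A}x=0$ trivially forces $\mathcal{B}(\omega_{1},\omega_{2})^{\dag}\mathcal{A}x=0$, only the reverse inclusion requires work.

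For the reverse inclusion I would take $x$ with $\mathcal{B}(\omega_{1},\omega_{2})^{\dag}\mathcal{A}x=0$ and write $\mathcal{A}x=(g_{1}^{*},g_{2}^{*})^{*}$ with $g_{1}=Mx_{1}+Ex_{2}$ and $g_{2}=-E^{*}x_{1}$; the goal is then to prove $g_{1}=0$ and $g_{2}=0$. Multiplying $(g_{1}^{*},g_{2}^{*})^{*}$ by the explicit form \eqref{12} of $\mathcal{B}(\omega_{1},\omega_{2})^{\dag}$ and setting the result to zero yields two block equations. Setting $h:=\omega_{1}E^{*}M^{-1}g_{1}+g_{2}$, the lower block reads $B^{\dag}h=0$, while the upper block, after multiplication by $M$, rearranges to $g_{1}=\omega_{2}EB^{\dag}h$. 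Substituting $B^{\dag}h=0$ into the latter gives $g_{1}=0$ at once.

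With $g_{1}=0$ we then have $h=g_{2}$, so the lower block becomes $B^{\dag}g_{2}=0$, i.e. $B^{\dag}E^{*}x_{1}=0$. Here the singularity of $B^{\dag}$ prevents concluding $g_{2}=0$ directly, and this is the step where the special choice $B=E^{*}P^{-1}E$ is essential: because $B$ shares the null space of $E$ with the Shur complement, the identity $BB^{\dag}E^{*}=E^{*}$ recorded before \eqref{12} is available. Applying $B$ to $B^{\dag}E^{*}x_{1}=0$ therefore gives $E^{*}x_{1}=BB^{\dag}E^{*}x_{1}=0$, that is $g_{2}=0$. Hence $\mathcal{A}x=0$, which establishes the reverse inclusion and the claimed equality. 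I expect this last step---turning $B^{\dag}g_{2}=0$ into $g_{2}=0$ through the range/null-space compatibility between $B$ and $E$---to be the only genuinely delicate point, since everything else reduces to a direct block computation from \eqref{12}.
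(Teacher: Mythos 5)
Your proof is correct, but it takes a different route from the paper's. The paper first establishes the general identity $\text{null}(\mathcal{B}(\omega_{1},\omega_{2})^{\dag}\mathcal{A})=\text{null}(\mathcal{B}(\omega_{1},\omega_{2})\mathcal{B}(\omega_{1},\omega_{2})^{\dag}\mathcal{A})$ (one inclusion is trivial; the other follows from $\mathcal{B}^{\dag}=\mathcal{B}^{\dag}\mathcal{B}\mathcal{B}^{\dag}$), and then computes from the factored form \eqref{11} that $\mathcal{B}(\omega_{1},\omega_{2})\mathcal{B}(\omega_{1},\omega_{2})^{\dag}\mathcal{A}=\mathrm{diag}(I_{p},B^{\dag}B)\,\mathcal{A}=\mathcal{A}$, using $B^{\dag}BE^{*}=E^{*}$; the conclusion is then immediate. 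You instead work directly with the explicit block formula \eqref{12} for $\mathcal{B}(\omega_{1},\omega_{2})^{\dag}$, eliminate the upper block to get $g_{1}=\omega_{2}EB^{\dag}h=0$ from the lower-block relation $B^{\dag}h=0$, and then upgrade $B^{\dag}E^{*}x_{1}=0$ to $E^{*}x_{1}=0$ via $BB^{\dag}E^{*}=E^{*}$. Both arguments hinge on exactly the same range compatibility between $B=E^{*}P^{-1}E$ and $E^{*}$, and you correctly flag that as the one delicate step. The paper's route is slightly more structural --- the identity $\mathcal{B}\mathcal{B}^{\dag}\mathcal{A}=\mathcal{A}$ it proves is a statement about ranges ($\mathrm{range}(\mathcal{A})\subseteq\mathrm{range}(\mathcal{B})$) that is reusable elsewhere and does not require the explicit formula \eqref{12} to be trusted; your route is more elementary and self-contained once \eqref{12} is accepted, and has the minor bonus of exhibiting $\text{null}(\mathcal{A})=\{0\}\times\text{null}(E)$ explicitly along the way.
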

\begin{proof}
Let $x\in\mathbb{C}^{p+q}$ satisfy $\mathcal{B}(\omega_{1},\omega_{2})\mathcal{B}(\omega_{1},\omega_{2})^{\dag}\mathcal{A}x=0$, then
\begin{equation*}
\mathcal{B}(\omega_{1},\omega_{2})^{\dag}\mathcal{A}x=\mathcal{B}(\omega_{1},\omega_{2})^{\dag}
(\mathcal{B}(\omega_{1},\omega_{2})\mathcal{B}(\omega_{1},\omega_{2})^{\dag}\mathcal{A}x)=0.
\end{equation*}
So, we have
\begin{equation*}
\text{null}(\mathcal{B}(\omega_{1},\omega_{2})\mathcal{B}(\omega_{1},\omega_{2})^{\dag}\mathcal{A})\subseteq \text{null}(\mathcal{B}(\omega_{1},\omega_{2})^{\dag}\mathcal{A}).
\end{equation*}
Note that $\text{null}(\mathcal{B}(\omega_{1},\omega_{2})\mathcal{B}(\omega_{1},\omega_{2})^{\dag}\mathcal{A})\supseteq \text{null}(\mathcal{B}(\omega_{1},\omega_{2})^{\dag}\mathcal{A})$ is obvious, we get
\begin{equation}\label{05}
\text{null}(\mathcal{B}(\omega_{1},\omega_{2})\mathcal{B}(\omega_{1},\omega_{2})^{\dag}\mathcal{A})= \text{null}(\mathcal{B}(\omega_{1},\omega_{2})^{\dag}\mathcal{A}).
\end{equation}
Simple calculation gives
\begin{equation}\label{06}
\mathcal{B}(\omega_{1},\omega_{2})\mathcal{B}(\omega_{1},\omega_{2})^{\dag}\mathcal{A}=\left(
                                                                              \begin{array}{cc}
                                                                                I_{p} & 0 \\
                                                                               0 & B^{\dag}B \\
                                                                              \end{array}
                                                                            \right)\left(
                                                                                     \begin{array}{cc}
                                                                                       M & E \\
                                                                                       -E^{*} & 0 \\
                                                                                     \end{array}
                                                                                   \right)=\left(
                                                                                     \begin{array}{cc}
                                                                                       M & E \\
                                                                                       -E^{*} & 0 \\
                                                                                     \end{array}
                                                                                   \right)=\mathcal{A}.
\end{equation}
Hence, using \eqref{05} and \eqref{06}, we finally obtain that $\text{null}(\mathcal{B}(\omega_{1},\omega_{2})^{\dag}\mathcal{A})=\text{null}(\mathcal{A})$.
\end{proof}
\subsection{the conditions for index$(I-\mathcal{G}(\omega_{1},\omega_{2},\tau))=1$}
\begin{lemma}\label{lem:4}
Let matrices $\mathcal{A}$ and $\mathcal{B}(\omega_{1},\omega_{2})$ be defined by \eqref{01} and \eqref{11}, respectively, and $B$=$E^{*}P^{-1}E$, with $P$ being Hermitian positive definite. Then $\text{index}(I-\mathcal{G}(\omega_{1},\omega_{2},\tau))=1$ or equivalently, \begin{equation}\label{29}
\text{rank}(I-\mathcal{G}(\omega_{1},\omega_{2},\tau))=\text{rank}((I-\mathcal{G}(\omega_{1},\omega_{2},\tau))^{2}).
\end{equation}
\end{lemma}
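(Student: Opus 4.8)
The plan is to mirror the argument used for the nonsingular case in Section \ref{sec3}, adapting it to the Moore--Penrose setting. Since $\mathcal{G}(\omega_{1},\omega_{2},\tau)=I-\tau\mathcal{B}(\omega_{1},\omega_{2})^{\dag}\mathcal{A}$ by \eqref{23} and $\tau>0$, the identity \eqref{29} is equivalent to
\[
\text{null}\big((\mathcal{B}(\omega_{1},\omega_{2})^{\dag}\mathcal{A})^{2}\big)=\text{null}\big(\mathcal{B}(\omega_{1},\omega_{2})^{\dag}\mathcal{A}\big).
\]
The inclusion $\supseteq$ is automatic, so I would only establish $\subseteq$. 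To this end I would take $x=(x_{1}^{*},x_{2}^{*})^{*}\in\mathbb{C}^{p+q}$ with $(\mathcal{B}(\omega_{1},\omega_{2})^{\dag}\mathcal{A})^{2}x=0$, set $y=\mathcal{B}(\omega_{1},\omega_{2})^{\dag}\mathcal{A}x$, and aim to prove $y=0$.

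First I would use the explicit form \eqref{12} of $\mathcal{B}(\omega_{1},\omega_{2})^{\dag}$ to write $y$ in block form; in particular the second block is $y_{2}=B^{\dag}E^{*}w$ with $w:=(\omega_{1}-1)x_{1}+\omega_{1}M^{-1}Ex_{2}$, exactly as in \eqref{26} but with $B^{\dag}$ in place of $B^{-1}$. The key structural step comes next: from $\mathcal{B}(\omega_{1},\omega_{2})^{\dag}\mathcal{A}y=(\mathcal{B}(\omega_{1},\omega_{2})^{\dag}\mathcal{A})^{2}x=0$ we have $y\in\text{null}(\mathcal{B}(\omega_{1},\omega_{2})^{\dag}\mathcal{A})$. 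Here lies the main obstacle: unlike the nonsingular case, the singular factor $\mathcal{B}(\omega_{1},\omega_{2})^{\dag}$ cannot simply be cancelled to deduce $\mathcal{A}y=0$ directly. I would overcome this by invoking Lemma \ref{lem:3}, which gives $\text{null}(\mathcal{B}(\omega_{1},\omega_{2})^{\dag}\mathcal{A})=\text{null}(\mathcal{A})$, so that indeed $\mathcal{A}y=0$, i.e. $My_{1}+Ey_{2}=0$ and $E^{*}y_{1}=0$.

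From these two relations I would proceed as in Section \ref{sec3}: eliminating $y_{1}=-M^{-1}Ey_{2}$ yields $E^{*}M^{-1}Ey_{2}=0$, hence $(Ey_{2})^{*}M^{-1}(Ey_{2})=0$, and the positive definiteness of $M^{-1}$ forces $Ey_{2}=0$ and then $y_{1}=0$. It remains to show $y_{2}=0$. Using $y_{2}=B^{\dag}E^{*}w$ together with $Ey_{2}=EB^{\dag}E^{*}w=0$, I would left-multiply by $w^{*}$ to obtain $u^{*}B^{\dag}u=0$ with $u:=E^{*}w$; since $B=E^{*}P^{-1}E$ is Hermitian positive semi-definite its Moore--Penrose inverse $B^{\dag}$ is also Hermitian positive semi-definite, so $u^{*}B^{\dag}u=0$ forces $B^{\dag}u=0$, that is $y_{2}=B^{\dag}E^{*}w=0$. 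Thus $y=0$, the inclusion $\subseteq$ holds, and \eqref{29} follows. The only genuinely new ingredients relative to the nonsingular proof are the appeal to Lemma \ref{lem:3} to recover $\mathcal{A}y=0$ and the replacement of the definite cancellation by its semi-definite analogue $u^{*}B^{\dag}u=0\Rightarrow B^{\dag}u=0$.
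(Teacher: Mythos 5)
Your proposal is correct and follows essentially the same route as the paper: reduce the index-one claim to $\text{null}((\mathcal{B}(\omega_{1},\omega_{2})^{\dag}\mathcal{A})^{2})\subseteq\text{null}(\mathcal{B}(\omega_{1},\omega_{2})^{\dag}\mathcal{A})$, invoke Lemma \ref{lem:3} to pass from $\mathcal{B}(\omega_{1},\omega_{2})^{\dag}\mathcal{A}y=0$ to $\mathcal{A}y=0$, and then deduce $Ey_{2}=0$ and $y_{1}=0$ exactly as in Section \ref{sec3}. The only (harmless) deviation is the final step: the paper obtains $y_{2}=0$ from the identity $B^{\dag}=B^{\dag}BB^{\dag}$ with $B=E^{*}P^{-1}E$, writing $y_{2}=B^{\dag}E^{*}P^{-1}(Ey_{2})=0$, whereas you use the equally valid observation that $B^{\dag}$ is Hermitian positive semi-definite, so $w^{*}EB^{\dag}E^{*}w=0$ forces $y_{2}=B^{\dag}E^{*}w=0$.
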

\begin{proof}
Since $I-\mathcal{G}(\omega_{1},\omega_{2},\tau)=\tau\mathcal{B}(\omega_{1},\omega_{2})^{\dag}\mathcal{A}$, the equality \eqref{29} holds if
\[
\text{null}((\mathcal{B}(\omega_{1},\omega_{2})^{\dag}\mathcal{A})^{2})=\text{null}(\mathcal{B}(\omega_{1},\omega_{2})^{\dag}\mathcal{A}).
\]
Since $\text{null}((\mathcal{B}(\omega_{1},\omega_{2})^{\dag}\mathcal{A})^{2})\supseteq \text{null}(\mathcal{B}(\omega_{1},\omega_{2})^{\dag}\mathcal{A})$ is obvious, we only need to prove \[\text{null}((\mathcal{B}(\omega_{1},\omega_{2})^{\dag}\mathcal{A})^{2})\subseteq \text{null}(\mathcal{B}(\omega_{1},\omega_{2})^{\dag}\mathcal{A}).\]

Suppose that $x=(x_{1}^{*},x_{2}^{*})^{*}\in\mathbb{C}^{p+q}$ satisfies $(\mathcal{B}(\omega_{1},\omega_{2})^{\dag}\mathcal{A})^{2}x=0$, we have
\begin{eqnarray*}
\mathcal{B}(\omega_{1},\omega_{2})^{\dag}\mathcal{A}x&=&\left(
                                                          \begin{array}{cc}
                                                            I_{p}+\omega_{2}(1-\omega_{2})M^{-1}EB^{\dag}E^{*} & M^{-1}E-\omega_{1}\omega_{2}M^{-1}EB^{\dag}E^{*}M^{-1}E \\
                                                            (\omega_{1}-1)B^{\dag}E^{*} & \omega_{1}B^{\dag}E^{*}M^{-1}E \\
                                                          \end{array}
                                                        \right)\left(
                                                                 \begin{array}{c}
                                                                   x_{1} \\
                                                                   x_{2} \\
                                                                 \end{array}
                                                               \right)\\
&=&\left(
                                                          \begin{array}{cc}
                                                            (I_{p}+\omega_{2}(1-\omega_{2})M^{-1}EB^{\dag}E^{*})x_{1}+(M^{-1}E-\omega_{1}\omega_{2}M^{-1}EB^{\dag}E^{*}M^{-1}E)x_{2} \\
                                                            (\omega_{1}-1)B^{\dag}E^{*}x_{1}+(\omega_{1}B^{\dag}E^{*}M^{-1}E)x_{2} \\
                                                          \end{array}
                                                        \right)\\
&=&\left(
     \begin{array}{c}
       y_{1} \\
       y_{2} \\
     \end{array}
   \right)\equiv y.
\end{eqnarray*}
In the following, we only need to prove $\mathcal{B}(\omega_{1},\omega_{2})^{\dag}\mathcal{A}x=y=0$. Owing to $\text{null}(\mathcal{B}(\omega_{1},\omega_{2})^{\dag}\mathcal{A})=\text{null}(\mathcal{A})$ and \[\mathcal{B}(\omega_{1},\omega_{2})^{\dag}\mathcal{A}y=(\mathcal{B}(\omega_{1},\omega_{2})^{\dag}\mathcal{A})^{2}x=0,\]
we have $\mathcal{A}y=0$, i.e.,
\begin{equation}\label{07}
My_{1}+Ey_{2}=0 \quad \text{and} \quad -E^{*}y_{1}=0.
\end{equation}
Since $M$ is nonsingular, solving $y_{1}$ from the first equality of \eqref{07} and taking into the second equality, we have $E^{*}M^{-1}Ey_{2}=0$, which means
\begin{equation*}
(Ey_{2})^{*}M^{-1}(Ey_{2})=y_{2}(E^{*}M^{-1}Ey_{2})=0.
\end{equation*}
Owing to the positive definiteness of matrix $M^{-1}$, we can obtain that $Ey_{2}=0$. Hence, using the first equality of \eqref{07} gives $y_{1}=0$.

Using $Ey_{2}=0$ and $B^{\dag}BE^{*}=E^{*}$\cite{ZhangShen2013116}, we have
\begin{eqnarray*}
y_{2}&=&(\omega_{1}-1)B^{\dag}E^{*}x_{1}+(\omega_{1}B^{\dag}E^{*}M^{-1}E)x_{2}\\
&=&B^{\dag}E^{*}P^{-1}[(\omega_{1}-1)EB^{\dag}E^{*}x_{1}+(\omega_{1}EB^{\dag}E^{*}M^{-1}E)x_{2}]\\
&=&B^{\dag}E^{*}P^{-1}(Ey_{2})=0
\end{eqnarray*}
Finally, we obtain $y=(y_{1}^{*},y_{2}^{*})^{*}=0$, so $\text{null}((\mathcal{B}(\omega_{1},\omega_{2})^{\dag}\mathcal{A})^{2})=\text{null}(\mathcal{B}(\omega_{1},\omega_{2})^{\dag}\mathcal{A})$.
\end{proof}

Using Lemmas \ref{lem:1} and \ref{lem:2}-\ref{lem:4}, we obtain the semi-convergence property of GSTS iteration method for singular saddle-point linear systems.
\begin{theorem}
  Let parameters $\omega_1$, $\omega_2$ and $\tau$ satisfy the conditions of Lemma \ref{lem:2} and matrix $B=E^{*}P^{-1}E$ with $P$ being a Hermitian positive definite approximation of matrix $M$. Then, the GSTS iteration method used for solving singular saddle-point linear system \eqref{01} is semi-convergent.
\end{theorem}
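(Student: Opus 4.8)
The plan is to establish the theorem simply by verifying, one at a time, the three semi-convergence criteria collected in Lemma \ref{lem:1}, since all of the substantive work has already been carried out in the preceding lemmas. First I would match the iteration \eqref{22} to the abstract form required by Lemma \ref{lem:1}: writing the scheme as $u^{(k+1)}=u^{(k)}-\tau\mathcal{B}(\omega_{1},\omega_{2})^{\dag}(\mathcal{A}u^{(k)}-f)$, I identify $\mathcal{M}^{\dag}=\tau\mathcal{B}(\omega_{1},\omega_{2})^{\dag}$, equivalently $\mathcal{M}=(1/\tau)\mathcal{B}(\omega_{1},\omega_{2})$ via $(c\,X)^{\dag}=c^{-1}X^{\dag}$ for the nonzero scalar $\tau$, so that the iteration matrix $\mathcal{G}(\omega_{1},\omega_{2},\tau)$ of \eqref{23} coincides with the $\mathcal{G}=I-\mathcal{M}^{\dag}\mathcal{A}$ appearing in Lemma \ref{lem:1}.

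Next I would check each of the three conditions in turn. Condition (1), $\gamma(\mathcal{G})<1$, is exactly the content of Lemma \ref{lem:2}: under the stated restriction on $\tilde{\omega}=(\omega_{1}-1)(\omega_{2}-1)$ and the corresponding bound on $\tau$ in terms of $\alpha$ and $\beta_{2}$, the pseudo-spectral radius lies below one. Here I would recall the block-triangular reduction $\hat{\mathcal{G}}=\mathcal{P}^{*}\mathcal{G}\mathcal{P}$ obtained from the singular value decomposition \eqref{10}, which isolates the trailing $I_{q-r}$ block, responsible for the eigenvalue $1$, from the deflated nonsingular saddle-point iteration matrix $\hat{\mathcal{G}}_{1}$; since $\hat{\mathcal{L}}\neq 0$, controlling $\gamma(\mathcal{G})$ reduces to the spectral bound $\rho(\hat{\mathcal{G}}_{1})<1$ furnished by Theorem 3.3 of \cite{Krukier2013}. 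Condition (2), $\text{null}(\mathcal{M}^{\dag}\mathcal{A})=\text{null}(\mathcal{A})$, is supplied by Lemma \ref{lem:3}, whose argument rests on the identity $\mathcal{B}(\omega_{1},\omega_{2})\mathcal{B}(\omega_{1},\omega_{2})^{\dag}\mathcal{A}=\mathcal{A}$ established in \eqref{06}. Finally, condition (3), $\text{index}(I-\mathcal{G})=1$, is precisely Lemma \ref{lem:4}.

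Having verified all three hypotheses, the conclusion follows by a direct appeal to Lemma \ref{lem:1}. The only point requiring genuine care is not a calculation but a conceptual one: because $\mathcal{B}(\omega_{1},\omega_{2})$ is now singular, the extra null-space condition (2)---absent from the nonsingular analysis of Lemma \ref{lem:6}---must be included, and it is essential that the choice $B=E^{*}P^{-1}E$ share the null space of the Schur complement $S_{M}$, which is exactly what makes \eqref{06} valid. I do not expect any further obstacle beyond confirming that the parameter hypotheses assumed in the theorem are literally those of Lemma \ref{lem:2}, which they are.
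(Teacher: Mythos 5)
Your proposal is correct and follows exactly the paper's route: the theorem is obtained by verifying the three conditions of Lemma \ref{lem:1} via Lemmas \ref{lem:2}, \ref{lem:3} and \ref{lem:4}, which is precisely how the paper concludes. The only (harmless) discrepancy is the identification of $\mathcal{M}$ up to the scalar $\tau$ versus $1/\tau$, which does not affect any of the null-space or spectral conditions.
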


In GSTS iteration method, we can particularly choose $P=M$ since $P$ is an approximation of $M$. In this case, we have $B=E^{*}M^{-1}E=S_M$. Simple calculation gives
\begin{equation*}
\alpha=\frac{z^{*}E_{r}^{*}\hat{M}^{-1}E_{r}z}{z^{*}z}
=\frac{z^{*}(\Sigma_{r},0)U^{*}M^{-1}U\left(
                \begin{array}{c}
                  \Sigma_{r} \\
                  0 \\
                \end{array}
              \right)z}{z^{*}z}=\frac{z^{*}\Sigma_{r}U_{1}^{*}M^{-1}U_{1}\Sigma_{r}z}{z^{*}z},
\end{equation*}
and
\begin{equation*}
\beta_{2}=\frac{z^{*}\hat{S}_Pz}{z^{*}z}=\frac{z^{*}\Sigma_{r}\hat{P}^{-1}\Sigma_{r}z}{z^{*}z}=\frac{z^{*}\Sigma_{r}U_{1}^{*}P^{-1}U_{1}\Sigma_{r}z}{z^{*}z}
=\frac{z^{*}\Sigma_{r}U_{1}^{*}M^{-1}U_{1}\Sigma_{r}z}{z^{*}z}.
\end{equation*}
Obviously, under this assumption, we have $\alpha=\beta_{2}$. The convergence property of the particular GSTS method becomes
\begin{corollary}
Denote $\tilde{\omega}=(\omega_{1}-1)(\omega_{2}-1)$. Let matrices $\mathcal{A}$ and $\mathcal{B}(\omega_{1},\omega_{2})$ be defined by \eqref{01} and \eqref{11}, respectively, and $B$=$E^{*}M^{-1}E$. Then, the GSTS iteration method is semi-convergent, provided that the parameters $\omega_{1}$, $\omega_{2}$ satisfy $\tilde{\omega}<2$ and the parameter $\tau$ satisfies
\begin{enumerate}
  \item[(a)] if $0\leq\tilde{\omega}<2$, then $0<\tau<2-\tilde{\omega}$;
  \item[(b)] if $\tilde{\omega}<0$, then $0<\tau<2-\tilde{\omega}-\sqrt{\tilde{\omega}(\tilde{\omega}-4)}$.
\end{enumerate}
\end{corollary}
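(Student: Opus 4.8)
The plan is to read the corollary off as the specialization $P=M$ of Lemma~\ref{lem:2}, after which everything reduces to elementary algebra. First I would observe that $P=M$ is an admissible choice, being a Hermitian positive definite approximation of $M$, so the hypotheses of the theorem immediately preceding the corollary are met: the null-space condition of Lemma~\ref{lem:3} and the index condition of Lemma~\ref{lem:4} are proved there for \emph{every} Hermitian positive definite $P$, hence in particular for $P=M$. Consequently, by Lemma~\ref{lem:1}, semi-convergence follows once the pseudo-spectral radius condition $\gamma(\mathcal{G})<1$ is secured, and that is exactly what Lemma~\ref{lem:2} provides under parameter restrictions phrased through the scalars $\alpha$ and $\beta_2$.

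The decisive simplification is the identity $\alpha=\beta_2$, already established in the computation directly above the statement: with $P=M$ one has $\hat{S}_P=\Sigma_{r}U_{1}^{*}M^{-1}U_{1}\Sigma_{r}$, which is precisely the matrix defining $\alpha$. I would then substitute $\beta_2=\alpha$ into the conditions of Lemma~\ref{lem:2}. The requirement $\tilde{\omega}<(\alpha+\beta_2)/\alpha$ collapses to $\tilde{\omega}<2$, which is the standing assumption on $\omega_1$ and $\omega_2$ in the corollary.

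For the parameter $\tau$, the quantity governing the case split is the discriminant $[\beta_2+(1-\tilde{\omega})\alpha]^{2}-4\alpha\beta_2$. Setting $\beta_2=\alpha$ and factoring out $\alpha^{2}$, I expect this to reduce to $\alpha^{2}[(2-\tilde{\omega})^{2}-4]=\alpha^{2}\,\tilde{\omega}(\tilde{\omega}-4)$. Since $\alpha>0$, its sign equals that of $\tilde{\omega}(\tilde{\omega}-4)$; combined with $\tilde{\omega}<2$, this yields the two regimes of the corollary, the discriminant being nonpositive exactly when $0\le\tilde{\omega}<2$ (case~(a)) and positive exactly when $\tilde{\omega}<0$ (case~(b)). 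In case~(a) the upper bound $[\beta_2+(1-\tilde{\omega})\alpha]/\alpha$ simplifies to $2-\tilde{\omega}$; in case~(b), using $\sqrt{\alpha^{2}\tilde{\omega}(\tilde{\omega}-4)}=\alpha\sqrt{\tilde{\omega}(\tilde{\omega}-4)}$, the bound becomes $2-\tilde{\omega}-\sqrt{\tilde{\omega}(\tilde{\omega}-4)}$. These are precisely the two $\tau$-intervals asserted.

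No step here is genuinely hard; the only point requiring care is the factorization of the discriminant and the cancellation of $\alpha$ in the radical term, where one must invoke $\alpha>0$ to pull $\alpha$ out of the square root with the correct sign. I therefore expect the argument to be essentially a transcription of Lemma~\ref{lem:2} with $\beta_2$ replaced by $\alpha$, the payoff being that all parameter conditions become independent of the spectral data and depend only on the single scalar $\tilde{\omega}$.
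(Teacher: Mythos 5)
Your proposal is correct and follows exactly the paper's route: the paper likewise specializes to $P=M$, computes $\alpha=\beta_2$ via the singular value decomposition, and reads the corollary off Lemma~\ref{lem:2} (together with Lemmas~\ref{lem:3} and~\ref{lem:4}, which hold for any Hermitian positive definite $P$). Your explicit verification that the discriminant factors as $\alpha^{2}\,\tilde{\omega}(\tilde{\omega}-4)$ and that the two $\tau$-bounds collapse to $2-\tilde{\omega}$ and $2-\tilde{\omega}-\sqrt{\tilde{\omega}(\tilde{\omega}-4)}$ is exactly the (omitted) algebra the paper relies on.
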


\section{Numerical results}\label{sec5}
In this section, we assess the feasibility and robustness of the GSTS iteration methods for solving the singular saddle-point problems \eqref{01}. In addition, the preconditioning effects of the GSTS preconditioners for GMRES(10) and QMR methods will also be tested.

Consider the Stokes equations of the following form
\begin{equation}\label{34}
    \left\{
    \begin{split}
      &-\nu \Delta \textbf{u}+\nabla p = \textbf{f},\quad \text{in }\Omega,\\
      &-\nabla\cdot \textbf{u}=0,\quad \text{in }\Omega,
    \end{split}\right.
\end{equation}
where $\Omega$ is an open bounded domain in $\mathbb{R}^2$, vector $\textbf{u}$ represents the velocity in $\Omega$, function $p$ represents pressure, and
the scalar $\nu>0$ is the viscosity constant. The boundary conditions are $\textbf{u} = (0,0)^T$ on the three
fixed walls $(x = 0, y = 0, x = 1)$, and $\textbf{u} = (1, 0)^T$ on the moving wall $(y = 1)$.

Dividing $\Omega$ into a uniform $l\times l$ grid with mesh size $h=1/l$ and discretizing \eqref{34} by the "marker and cell" (MAC) finite difference scheme \cite{HW19652182,Elman19991299}, the singular saddle-point system \eqref{01} is obtained, where
\[
M=\nu\left( \begin{array}{cc} A_1 & 0 \\ 0 & A_2 \end{array} \right)\in\mathbb{R}^{2l(l-1)\times 2l(l-1)},\quad E^*=(E_1,E_2)\in\mathbb{R}^{l^2\times 2l(l-1)}.
\]
The coefficient matrix $\mathcal{A}$ of \eqref{01} has the following properties: $M$ is symmetric and positive definite, $\text{rank}(E) = l^2-1$, thus $\mathcal{A}$ is singular.

Based on the different choices of matrix $P$
and parameters $\omega_1$, $\omega_2$ and $\tau$, we test five cases of the GSTS iteration method listed in Table \ref{tab:01}. In order to reduce the complexity for finding the experimental optimal values of $\omega_1$, $\omega_2$ and $\tau$, we particularly choose $\omega_{1}=\tau$. The last two cases of GSTS iteration method reduce to the generalized successive overrelaxation (GSOR) methods discussed in \cite{ZBY2009808}. Here, $\omega_{\exp}$ and $\tau_{\exp}$ in GSTS methods denote the experimental optimal values of the iteration parameters $\omega_{1}$ and $\tau$, respectively, while $\omega_{\mathrm{opt}}$ and $\nu_{\mathrm{opt}}$ in GSOR denote the theoretical optimal values; see Theorem 4.1 in \cite{ZBY2009808}.
\begin{table}[h]
\caption{The five cases of GSTS iteration method} \label{tab:01}
\begin{center}
\begin{tabular}{*{3}{l}}
\hline\noalign{\smallskip}
Methods& Preconditioning matrix $B$ &Parameters\\
\hline\noalign{\smallskip}
GSTS I   &$E^{*}M^{-1}E$ & $\omega_1=\tau=\tau_{\exp}$, $\omega_2=\omega_{\exp}$\\
GSTS II  &$I+E^{*}P^{-1}E$, with $P=\text{diag}(M)$ & $\omega_1=\tau=\tau_{\exp}$, $\omega_2=\omega_{\exp}$\\
GSTS III &$I+E^{*}P^{-1}E$, with $P=\text{tridiag}(M)$  & $\omega_1=\tau=\tau_{\exp}$, $\omega_2=\omega_{\exp}$\\
GSOR I  &$\frac{\tau}{\nu}(I+E^{*}P^{-1}E)$, with $P=\text{diag}(M)$ & $\omega_1=\tau=\omega_{\mathrm{opt}}$, $\omega_2=0$, $\nu=\tau_{\mathrm{opt}}$\\
GSOR II &$\frac{\tau}{\nu}(I+E^{*}P^{-1}E)$, with $P=\text{tridiag}(M)$ & $\omega_1=\tau=\omega_{\mathrm{opt}}$, $\omega_2=0$, $\nu=\tau_{\mathrm{opt}}$\\
\noalign{\smallskip}\hline
\end{tabular}
\end{center}
\end{table}

In actual computations, we choose $l=25$ and the right-hand-side vector $f\in \mathbb{R}^{3l^{2}-2l}$ such that the exact solution of \eqref{01} is $u^{*}=(1,2,\cdots,3l^{2}-2l)^{T}\in \mathbb{R}^{3l^{2}-2l}$. The iteration methods are started from zero vector and terminated once the current iterate $x^{(k)}$ satisfies
\begin{equation}\label{40}
\text{RES}=\sqrt{\frac{\|f_{1}-Mu^{(k)}_{1}-Eu^{(k)}_{2}\|^{2}_{2}+\|f_{2}+E^*u^{(k)}_{1}\|^{2}_{2}}{\|f_{1}\|^{2}_{2}+\|f_{2}\|^{2}_{2}}}< 10^{-6}.
\end{equation}
In addition, all codes were run in MATLAB [version 7.10.0.499 (R2010a)] in double precision and all experiments were performed on a personal computer with 3.10GHz central processing unit [Intel(R) Core(TM) Duo i5-2400] and 3.16G memory.

In Table \ref{tab:02}, we present the numerical results including iteration steps (denoted as IT),
elapsed CPU time in seconds (denoted as CPU) and relative residuals (denoted as RES) of the GSTS and
GSOR iteration methods listed in Table \ref{tab:01} and GMRES method. From the numerical results we
see that all the testing methods can converge to the approximate solutions. The five cases of GSTS
method perform better than GMRES method in iteration steps and CPU times. During the five cases
of GSTS method, the second and third cases, i.e., GSTS II and GSTS III, always outperform the fourth
and fifth cases, i.e., GSOR I and GSOR II methods, respectively, especially for the elapsed CPU time.
In GSTS I, we choose $B$ being a singular matrix. Comparing with GMRES and other four cases of GSTS method, GSTS I uses the least iteration number and CPU time to achieve the stop criterion.
\begin{table}[!h]
\caption{Numerical results of GSTS, GSOR and GMRES iteration methods} \label{tab:02}
\begin{center}
\begin{tabular}{rl*{5}{c}}
\hline\noalign{\smallskip}
&Method&$\omega_{\exp}(\omega_{\mathrm{opt}})$&$\tau_{\exp}(\tau_{\mathrm{opt}})$&IT&CPU&RES\\
\hline\noalign{\smallskip}
$\nu=1$      & GSTS I   & 0.98 & 1.01 & 3 & 0.0156 & 8.3841e-7 \\
             & GSTS II   & 0.99 & 1.03 & 12 & 0.0312 & 9.7573e-7 \\
             & GSTS III  & 0.97 & 1.02 & 13 & 0.0468 & 9.6413e-7 \\
             & GSOR I   & 0.89 & 2.07 & 14 & 0.1248 & 4.0305e-7 \\
             & GSOR II  & 0.89 & 2.11 & 14 & 0.1872 & 4.0232e-7\\
             & GMRES    & $-$ & $-$ & 182 & 1.3104 & 9.9927e-7\\
$\nu=0.01$   & GSTS I   & 0.99 & 1.00 & 3 & 0.0468 & 6.6282e-7 \\
             & GSTS II  & 0.01 & 0.30 & 73 & 0.1872 & 9.7501e-7 \\
             & GSTS III  & 0.02 & 0.34 & 63 & 0.1872 & 9.6153e-7 \\
             & GSOR I   & 0.38 & 0.24 & 68 & 0.4524 & 8.1165e-7 \\
             & GSOR II  & 0.43 & 0.28 & 58 & 0.5304 & 8.5245e-7 \\
             & GMRES    & $-$ & $-$ & 404 & 6.5988 & 9.7660e-7\\
$\nu=0.0001$ & GSTS I   & 0.98 & 1.00 & 3 & 0.0312 & 1.4299e-12 \\
             & GSTS II  & 0.01 & 0.17 & 115 & 0.3901 & 9.5478e-7 \\
             & GSTS III  & 0.01 & 0.24 & 110 & 0.3276 & 9.4855e-7 \\
             & GSOR I   & 0.24 & 0.14 & 164 & 0.7176 & 9.1062e-7 \\
             & GSOR II  & 0.32 & 0.20 & 118 & 0.8580 & 9.8351e-7 \\
             & GMRES    & $-$ & $-$ & 637 & 15.6004 & 9.7598e-7\\
\noalign{\smallskip}\hline
\end{tabular}
\end{center}
\end{table}

In addition to using GSTS as an iteration solver, we also use it to precondition GMRES method. The preconditioning effects of the five cases of GSTS method are compared with those of the Hermitian and skew-Hermitian splitting (HSS) preconditioner \cite{BGL20051,BGN2002603,BG200520} and the constraint preconditioner \cite{KGW20001300,ZhangWei2010139,Cao20081382,ZhangShen2013116}. Here, the non-singular constraint preconditioner (CP) is of the form
\[\mathcal{P}_c=\left( \begin{array}{cc} P & E \\ -E^* & I \end{array} \right),\]
where $P$ is an approximate matrix of $M$ and $I$ is an identity matrix. We name the constraint preconditioners $\mathcal{P}_c$ with $P=\text{diag}(M)$ and $P=\text{tridiag}(M)$, respectively, as CP I and CP II preconditioners. The HSS preconditioner is of the form
\[\mathcal{P}_{h}=(\alpha I+\mathcal{A}_{H})(\alpha I+\mathcal{A}_{S}),\]
where $\alpha>0$ is a constant, $\mathcal{A}_{H}$ and $\mathcal{A}_{S}$ are defined in \eqref{32}. In the implementation, $\alpha$ is chosen to be the experimental optimal value.
\begin{table}[!h]
\caption{Numerical results of GSTS and GSOR preconditioned GMRES methods} \label{tab:03}
\begin{center}
\begin{tabular}{l*{9}{c}}
\hline\noalign{\smallskip}
&&\multicolumn{2}{l}{$\nu=1$}&&\multicolumn{2}{l}{$\nu=0.01$}&&\multicolumn{2}{l}{$\nu=0.0001$}\\
\cline{3-4}\cline{6-7}\cline{9-10}\noalign{\smallskip}
Method&&IT&CPU&&IT&CPU&&IT&CPU\\
\hline\noalign{\smallskip}
GSTS I   && 14 & 0.0780 && 11 & 0.1560 && 3 & 0.1248 \\
GSTS II   && 17 & 0.0624 && 18 & 0.1872 && 4 & 0.1248 \\
GSTS III && 17 & 0.0757 && 16 & 0.2028 && 3 & 0.1404 \\
GSOR I   && 26 & 0.0780 && 27 & 0.6084 && 8 & 0.1560 \\
GSOR II  && 26 & 0.0936 && 22 & 0.7332 && 6 & 0.1716 \\
CP I    && 34 & 0.0793 && 29 & 0.3225 && 41 & 0.4209 \\
CP II   && 34 & 0.0880 && 28 & 0.3573 && 41 & 0.4370 \\
HSS      && 21 & 0.0816 && 22 & 0.4212 && 19 & 0.4056 \\
\noalign{\smallskip}\hline
\end{tabular}
\end{center}
\end{table}

In Table \ref{tab:03}, we list the iteration numbers and CPU times of the preconditioned GMRES methods used for solving singular saddle-point linear system \eqref{01}. From the numerical results, we see that the superiorities of GSTS preconditioners, comparing with the constraint and HSS preconditioners, become more and more evident with the decrease of parameter $\nu$. This may be because the smaller of the parameter $\nu$, the stronger of the skew-Hermitian part of the saddle-point matrix. In addition, we can also find that the preconditioning effect of singular GSTS I preconditioner is the best one during the eight preconditioners listed in Table \ref{tab:03}.

Thus, we can conclude that the GSTS iteration methods, no matter used as solvers or as preconditioners for GMRES method, are always feasible and effective for solving singular saddle-point linear systems.

\section{Conclusion}\label{sec6}
In this work, we used the GSTS iteration methods to solve \emph{singular} saddle-point linear system \eqref{01}. For each of the two choices of preconditioning matrix $B$, the semi-convergence conditions of GSTS iteration method were derived. Numerical results verified the effectiveness of the GSTS method both used as a solver and as a preconditioner for the GMRES method.

However, the GSTS method involves three iteration parameters $\omega_1$, $\omega_2$ and $\tau$. The choices of these parameters were not discussed in this work since it is a very difficult and complicated task. Considering that the efficiency of GSTS method largely depends on the values of these parameters, how to determine efficient and easy calculated parameters should be a direction of future research.

\bibliographystyle{model1-num-names}

\bibliography{References}

\end{document}